\newcommand{\Longtop}[2][.424\linewidth]{%
  \leavevmode\hfill\makebox[#1][l]{\begin{minipage}[t]{0.024\linewidth}$\triangleright$\end{minipage}\begin{minipage}[t]{0.4\linewidth}\setlength\parfillskip{0pt}#2\end{minipage}}}
\newcommand{\Longbottom}[2][.424\linewidth]{%
  \leavevmode\hfill\makebox[#1][l]{\hspace{0.024\linewidth}\begin{minipage}[t]{0.4\linewidth}\setlength\parfillskip{0pt}#2\end{minipage}}}
\newcommand{\Top}[2][.424\linewidth]{%
  \leavevmode\hfill\makebox[#1][l]{\begin{minipage}[t]{0.024\linewidth}$\triangleright$\end{minipage}\begin{minipage}[t]{0.4\linewidth}#2\end{minipage}}}
\newcommand{\Bottom}[2][.424\linewidth]{%
  \leavevmode\hfill\makebox[#1][l]{\hspace{0.024\linewidth}#2}}
\newcommand{\Longstate}[2][0cm]{%
  \leavevmode\makebox[#1][l]{\begin{minipage}[t]{18.5em}\setlength\parfillskip{0pt}#2\end{minipage}}}
\newcommand{\Shortstate}[2][0cm]{%
  \leavevmode\makebox[#1][l]{\begin{minipage}[t]{17em}\setlength\parfillskip{0pt}#2\end{minipage}}}
\algnewcommand\algorithmicto{\textbf{to}}
\algnewcommand\RETURN{\State \textbf{return} }
\algrenewcommand\algorithmicdo{\textbf{do}}
\newcommand*\ALG@lastblockb{b}
\newcommand*\ALG@lastblocke{e}
\apptocmd{\ALG@beginblock}{%
    %\typeout{beginning block, nesting level \theALG@nested, line \arabic{ALG@line}}%
    \ifx\ALG@lastblock\ALG@lastblockb
        \ifnum\theALG@nested>1\relax\expandafter\@firstoftwo\else\expandafter\@secondoftwo\fi{\ALG@tikzborder}{}%
    \fi
    \let\ALG@lastblock\ALG@lastblockb%
}{}{\errmessage{failed to patch}}
\pretocmd{\ALG@endblock}{%
    %\typeout{ending block, nesting level \theALG@nested, line \arabic{ALG@line}}%
    \ifx\ALG@lastblock\ALG@lastblocke
        \addtocounter{ALG@nested}{1}%
        \addtolength\ALG@tlm{\csname ALG@ind@\theALG@nested\endcsname}%
        \ifnum\theALG@nested>1\relax\expandafter\@firstoftwo\else\expandafter\@secondoftwo\fi{\endALG@tikzborder}{}%
        \addtolength\ALG@tlm{-\csname ALG@ind@\theALG@nested\endcsname}%
        \addtocounter{ALG@nested}{-1}%
    \fi
    \let\ALG@lastblock\ALG@lastblocke%
}{}{\errmessage{failed to patch}}
\tikzset{ALG@tikzborder/.style={line width=0.5pt,black}}
\newcommand*\currenttextarea{current page text area}
\newcommand*{\updatecurrenttextarea}{%
    \if@twocolumn
        \if@firstcolumn
            \renewcommand*{\currenttextarea}{current page column 1 area}%
        \else
            \renewcommand*{\currenttextarea}{current page column 2 area}%
        \fi
    \else
        \renewcommand*\currenttextarea{current page text area}%
    \fi
}
\newcounter{ALG@tikzborder}
\newcounter{ALG@totaltikzborder}
\newenvironment{ALG@tikzborder}[1][]{%
    % Allow user to overwrite the used style locally
    \ifx&#1&\else
        \tikzset{ALG@tikzborder/.style={#1}}%
    \fi
    \stepcounter{ALG@totaltikzborder}%
    \expandafter\edef\csname ALG@ind@border@\theALG@nested\endcsname{\theALG@totaltikzborder}%
    \setcounter{ALG@tikzborder}{\csname ALG@ind@border@\theALG@nested\endcsname}%
    %\typeout{begin ALG border nesting level=\theALG@nested, tikzborder=\theALG@tikzborder, tlm=\the\ALG@tlm}%
    \tikz[overlay,remember picture] \coordinate (ALG@tikzborder-\theALG@tikzborder);% node {\theALG@tikzborder};% Modified \tikzmark macro
    \zlabel{ALG@tikzborder-begin-\theALG@tikzborder}%
    % Test if end-label is at the same page and draw first half of border if not, from start place to the end of the page
    \ifnum\zref@extract{ALG@tikzborder-begin-\theALG@tikzborder}{abspage}=\zref@extract{ALG@tikzborder-end-\theALG@tikzborder}{abspage} \else
        \updatecurrenttextarea
        \ALG@drawvline{[shift={(0pt,.5\ht\strutbox)}]ALG@tikzborder-\theALG@tikzborder}{\currenttextarea.south east}{\ALG@thistlm}%
        % If it spreads over more than two pages:
        \newcounter{ALG@tikzborderpages\theALG@tikzborder}%
        \setcounter{ALG@tikzborderpages\theALG@tikzborder}{\numexpr-\zref@extract{ALG@tikzborder-begin-\theALG@tikzborder}{abspage}+\zref@extract{ALG@tikzborder-end-\theALG@tikzborder}{abspage}}%
        \ifnum\value{ALG@tikzborderpages\theALG@tikzborder}>1
            \edef\nextcmd{\noexpand\AtBeginShipoutNext{\noexpand\ALG@tikzborderpage{\theALG@tikzborder}{\the\ALG@thistlm}}}%some pages need a border on the whole page
            \nextcmd
        \fi
    \fi
}{%
    \setcounter{ALG@tikzborder}{\csname ALG@ind@border@\theALG@nested\endcsname}%
    %\typeout{end ALG border nesting level=\theALG@nested, tikzborder=\theALG@tikzborder, tlm=\the\ALG@tlm}%
    \tikz[overlay,remember picture] \coordinate (ALG@tikzborder-end-\theALG@tikzborder);% node {\theALG@tikzborder};% Modified \tikzmark macro
    \zlabel{ALG@tikzborder-end-\theALG@tikzborder}%
    % Test if begin-label is at the same page and draw whole border if so, from start place to end place
    \updatecurrenttextarea
    \ifnum\zref@extract{ALG@tikzborder-begin-\theALG@tikzborder}{abspage}=\zref@extract{ALG@tikzborder-end-\theALG@tikzborder}{abspage}\relax
        \ALG@drawvline{[shift={(0pt,.5\ht\strutbox)}]ALG@tikzborder-\theALG@tikzborder}{ALG@tikzborder-end-\theALG@tikzborder}{\ALG@thistlm}%
    % Otherwise draw second half of border, from the top of the page to the end place
    \else
        %\settextarea
        \ALG@drawvline{\currenttextarea.north west}{ALG@tikzborder-end-\theALG@tikzborder}{\ALG@thistlm}%
    \fi
}
\newcommand*{\ALG@drawvline}[3]{%#1=from, #2=to, #3=value of \ALG@tlm/\ALG@thisthm
    \begin{tikzpicture}[overlay,remember picture]
        \draw [ALG@tikzborder]
            let \p0 = (\currenttextarea.north west), \p1=(#1), \p2 = (#2)
             in
            (#3+\fboxsep+.5\pgflinewidth+\x0,\y1+\fboxsep+.5\pgflinewidth)%-\fboxsep-.5\pgflinewidth
             --
            (#3+\fboxsep+.5\pgflinewidth+\x0,\y2-\fboxsep-.5\pgflinewidth)
            %node[midway,anchor=east] {\ALG@tikzbordertext}
        ;
    \end{tikzpicture}%
}
\newcommand{\ALG@tikzborderpage}[2]{%the whole page gets a border, #1=value of \theALG@tikzborder, #2=value of \ALG@tlm/\ALG@thistlm
    \updatecurrenttextarea
    \setcounter{ALG@tikzborder}{#1}%
    \ALG@drawvline{\currenttextarea.north west}{\currenttextarea.south east}{#2}%
    \addtocounter{ALG@tikzborderpages\theALG@tikzborder}{-1}%
    \ifnum\value{ALG@tikzborderpages\theALG@tikzborder}>1
        \AtBeginShipoutNext{\ALG@tikzborderpage{#1}{#2}}%
    \fi
    \vspace{-0.5\baselineskip}% Compensate for the generated extra space at begin of the page. No idea why exactly this happens.
}
\def\ALG@tikzbordertext{\the\ALG@tlm}
\newlength{\ALG@continueindent}
\newcommand*{\ALG@customparshape}{\parshape 2 \leftmargin \linewidth \dimexpr\ALG@tlm+\ALG@continueindent\relax \dimexpr\linewidth+\leftmargin-\ALG@tlm-\ALG@continueindent\relax}
\newcommand*{\ALG@customparshapex}{\parshape 1 \dimexpr\ALG@tlm+\ALG@continueindent\relax \dimexpr\linewidth+\leftmargin-\ALG@tlm-\ALG@continueindent\relax}
\apptocmd{\ALG@beginblock}{\ALG@customparshape\everypar{\ALG@customparshapex}}{}{\errmessage{failed to patch}}
\newtheorem{theorem}{Theorem}[section]
\newtheorem{lemma}[theorem]{Lemma}
\newtheorem{proposition}[theorem]{Proposition}
\newtheorem{corollary}[theorem]{Corollary}
\theoremstyle{definition}
\newtheorem{definition}[theorem]{Definition}
\newtheorem{notation}[theorem]{Notation}
\numberwithin{equation}{section}
\newcommand{\bR}{\mathbb{R}}
\newcommand{\bQ}{\mathbb{Q}}
\newcommand{\bZ}{\mathbb{Z}}
\newcommand{\bN}{\mathbb{N}}
\newcommand{\bq}{\mathbf{q}}
\newcommand{\bx}{\mathbf{x}}
\newcommand{\by}{\mathbf{y}}
\newcommand{\bu}{\mathbf{u}}
\newcommand{\be}{\mathbf{e}}
\newcommand{\bb}{\mathbf{b}}
\newcommand{\bc}{\mathbf{c}}
\newcommand{\M}{\text{M}}
\newcommand{\mM}{\skew{2.5}\check{\rule{0.0cm}{0.22cm}\smash{M}}}
\newcommand{\lcd}{\text{lcd}}
\newcommand{\adj}{\text{adj}}
\begin{document}

\title[Short vector problems and simultaneous approximation]{Reductions between short vector problems and simultaneous approximation}

\thanks{This research was supported by NSF-CAREER CNS-1652238 under the supervision of PI Dr. Katherine E. Stange.}

\author{Daniel E. Martin}
\address{University of California, Davis, CA, United States}
\email{dmartin@math.ucdavis.edu}

\subjclass[2010]{Primary: 52C07, 11H06, 68W25.}

\keywords{lattice reduction, shortest vector problem, simultaneous Diophantine approximation}

\date{\today}

\begin{abstract}
In 1982, Lagarias showed that solving the approximate Shortest Vector Problem also solves the problem of finding good simultaneous Diophantine approximations \cite{lagarias}. Here we provide a deterministic, dimension-preserving reduction in the reverse direction. It has polynomial time and space complexity, and it is gap-preserving under the appropriate norms. We also give an alternative to the Lagarias algorithm by first reducing the version of simultaneous approximation in \cite{lagarias} to one with no explicit range in which a solution is sought.\end{abstract}

\maketitle

\section{Introduction}

Our primary result is to show that a short vector problem reduces deterministically and with polynomial complexity to a single simultaneous approximation problem as presented in the definitions below. We use $\min^\times$ to denote the nonzero minimum, $\{\bx\}\in(-1/2,1/2\hspace{0.04cm}]^n$ to denote the fractional part of $\bx\in\bR^n$, and $[x]$ to denote the set $\{1,...,\lfloor x\rfloor\}$ for $x\in\bR$.

\begin{definition}\label{def:svp}A \emph{short vector problem} takes input $\alpha\in[1,\infty)$ and nonsingular $M\in\M_n(\bZ)$. A valid output is $\bq_0\in\bZ^n$ with $0<\|M\bq_0\| \leq \alpha\min^{\times}_{\bq\in\bZ^n}\!\|M\bq\|$. Let \textsc{svp} denote an oracle for such a problem.\end{definition}

\begin{definition}\label{def:gda}A \emph{good Diophantine approximation problem} takes input $\alpha,N\in[1,\infty)$ and $\bx\in\bQ^n$. A valid output is $q_0\in[\alpha N]$ with $\|\{q_0\bx\}\| 
\leq\alpha\min_{q\in[N]}\!\|\{q\bx\}\|$. Let \textsc{gda} denote an oracle for such a problem.\end{definition}

Our reduction asserts that if we can find short vectors in a very restricted family of lattices then we can find them in general, since behind a good Diophantine approximation problem is the lattice generated by $\bZ^n$ and one additional vector, $\bx$.

Literature more commonly refers to a short vector problem as a \textit{shortest vector problem} when $\alpha = 1$ and an \textit{approximate shortest vector problem} otherwise (often unrestricted to sublattices of $\bZ^n$, though we have lost no generality). A brief exposition can be found in \cite{nguyen}. See \cite{galbraith} or \cite{goldwasser} for a more comprehensive overview, \cite{peikert} for a focus on cryptographic applications, \cite{kumar} for a summary of hardness results, and \cite{bernstein} for relevance and potential applications to post-quantum cryptography. 

Regarding simultaneous approximation, Brentjes highlights several algorithms in \cite{brentjes}. For a sample of applications to attacking clique and knapsack-type problems see \cite{frank}, \cite{lagarias2}, and \cite{shamir}. Examples of cryptosystems built on the hardness of simultaneous approximation are \cite{armknecht}, \cite{baocang}, and \cite{inoue}. This version is taken from \cite{chen} and \cite{rossner}.

The reduction, given in Algorithm \ref{alg:agrawal}, preserves the gap $\alpha$ when the $\ell_{\infty}$-norm is used for both problems. This means the short vector problem defined by $\alpha$ and $M$ is solved by calling $\textsc{gda}(\alpha,\bx, N)$ for some $\bx\in\bQ^n$ and $N\in\bR$. It reverses a 1982 result of Lagarias, which reduces a good Diophantine approximation problem to \textsc{svp}. (See Theorem B in \cite{lagarias}, which refers to the problem as \textit{good simultaneous approximation}. We borrow its name from \cite{chen} and \cite{rossner}.) Though there is an important contextual distinction: \cite{lagarias} relates simultaneous approximation under the $\ell_{\infty}$-norm to lattice reduction under the $\ell_2$-norm, whereas \emph{all reductions in this paper assume a consistent norm}. 

Under Lagarias' (and the most common) setup ---the $\ell_{\infty}$-norm for \textsc{gda} and the $\ell_2$-norm for \textsc{svp}---we are not the first to go the other direction. In a seminar posted online from July 1, 2019, Agrawal presented an algorithm achieving this reduction which was complete less some minor details \cite{agrawal}. Tersely stated, he takes an upper triangular basis for a sublattice of $\bZ^n$ and transforms it inductively, using integer combinations and rigid rotations with two basis vectors at time, into a lattice (a rotated copy of the original) whose short vectors can be found via simultaneous approximation. The short vector problem defined by $\alpha$ and $M$ gets reduced to $\textsc{gda}(\alpha/\sqrt{2n},\bx,N)$, called multiple times in order to account for the unknown minimal vector length which is used to determine $\bx$. 

In contrast, the reduction here takes a completely different approach. It finds a sublattice which is nearly scaled orthonormal, so that only one additional vector is needed to generate the original lattice. This extra vector is the input for $\textsc{gda}$. We note that when switching between norms, our reduction is also not gap-preserving. To use Algorithm \ref{alg:agrawal} to solve a short vector problem with respect to the $\ell_2$-norm via \textsc{gda} with respect to the $\ell_{\infty}$-norm, the latter must be executed with the parameter $\alpha/\!\sqrt{n}$ to account for the maximum ratio of nonzero norms $\|\bq\|_2/\|\bq\|_{\infty}$.

The relationship between the two problems in Definitions \ref{def:svp} and \ref{def:gda} will be studied through the following intermediary.

\begin{definition}\label{def:sap}A \emph{simultaneous approximation problem} takes input $\alpha\in[1,\infty)$ and $\bx\in\bQ^n$. A valid output is $q_0\in\bZ$ with $0 < \|\{q_0\bx\}\| 
\leq\alpha\min^{\times}_{q\in\bZ}\!\|\{q\bx\}\|$. Let \textsc{sap} denote an oracle for such a problem.\end{definition}

This problem prohibits only the trivial solution, the least common denominator of $\bx$'s entries, while ``$N$" in a good Diophantine approximation problem is generally more restrictive.

Section \ref{sec:2} explores the relationship between the two versions of simultaneous approximation given in Definitions \ref{def:gda} and \ref{def:sap}. Among the results, only Proposition \ref{prop:equivalent} in Subsection \ref{ss:lose} is required to verify the final reduction of a short vector problem to either version of simultaneous approximation. Subsection \ref{ss:recover} contains Algorithm \ref{alg:diophantus}. It reduces a good Diophantine approximation problem to polynomially many \textsc{sap} calls, each executed with the parameter $\alpha/3.06$. So while this reduction is not gap-preserving, the inflation is independent of the input.

Section \ref{sec:3} reduces both versions of simultaneous approximation to \textsc{svp}. It begins with Algorithm \ref{alg:lagarias}, which solves Definition \ref{def:sap}'s version. We remark at the end of Subsection \ref{ss:start} how this reduction adapts to the inhomogeneous forms of these problems, meaning the search for $q_0\in\bZ$ or $\bq_0\in\bZ^n$ that makes $q_0\bx - \by$ or $M\bq_0 - \by$ small for some $\by\in\bQ^n$. (In this case the latter is known as the \textit{approximate closest vector problem}, exposited in chapter 18 of \cite{galbraith}, for example.) Then Subsection \ref{ss:lagarias} combines Algorithms \ref{alg:diophantus} and \ref{alg:lagarias} to solve Definition \ref{def:gda}'s version of simultaneous approximation using \textsc{svp}. This is our alternative to the Lagarias reduction.

Finally, Algorithm \ref{alg:agrawal} in Section \ref{sec:4} reduces a short vector problem to \textsc{gda} or \textsc{sap}. It also adapts to the inhomogeneous versions of \textsc{svp} and \textsc{sap} (not \textsc{gda}, as mentioned at the end of Subsection \ref{ss:discuss}). In Corollary \ref{cor:nphard} we observe that Algorithm \ref{alg:agrawal} facilitates a simpler proof that \textsc{gda} is NP-hard under an appropriate bound on $\alpha$, a result first obtained in \cite{chen}. Then we combine Algorithms \ref{alg:lagarias} and \ref{alg:agrawal} in Subsection \ref{ss:reduction} to solve a simultaneous approximation problem with \textsc{gda}. In particular, we give all six reductions among the defined problems, as shown in the diagram below.

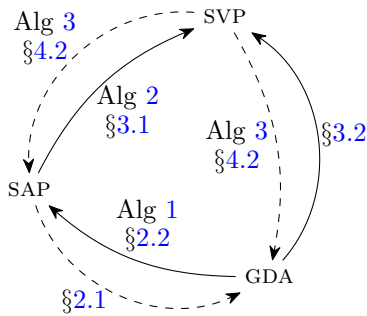
\begin{wrapfigure}{l}{0.41\textwidth}
\setlength{\abovecaptionskip}{0pt}
\setlength{\belowcaptionskip}{-8pt}
    \vspace{-0.2cm}
    \centering
    \begin{tikzpicture}[scale = 2.9]
    \draw(0,0,0) node (0) {\textsc{sap}};
    \draw(1.1,-0.4,0) node (1) {\textsc{gda}};
    \draw(0.896,0.793,0) node (2) {\textsc{svp}};
    \draw(0.07,0.76,0) node (3) {Alg \ref{alg:agrawal}};
    \draw(0.07,0.63,0) node (4) {\cref{ss:reduction}};
    \draw(0.95,0.26,0) node (5) {Alg \ref{alg:agrawal}};
    \draw(0.95,0.13,0) node (6) {\cref{ss:reduction}};
    \draw(1.44,0.25,0) node (7) {\cref{ss:lagarias}};
    \draw(0.45,0.42,0) node (8) {Alg \ref{alg:lagarias}};
    \draw(0.45,0.29,0) node (9) {\cref{ss:start}};
    \draw(0.54,-0.10,0) node (10) {Alg \ref{alg:diophantus}};
    \draw(0.54,-0.23,0) node (11) {\cref{ss:recover}};
    \draw(0.25,-0.515,0) node (12) {\cref{ss:lose}};

    \draw [bend right=49,-{Stealth[round,scale=1.3]}, dashed] (0) to (1);
    \draw [bend right=49,-{Stealth[round,scale=1.3]}] (1) to (2);
    \draw [bend right=49,-{Stealth[round,scale=1.3]}, dashed] (2) to (0);
    \draw [bend left=20,-{Stealth[round,scale=1.3]}] (0) to (2);
    \draw [bend left=20,-{Stealth[round,scale=1.3]}, dashed] (2) to (1);
    \draw [bend left=20,-{Stealth[round,scale=1.3]}] (1) to (0);
    \end{tikzpicture}
    \caption{Algorithm and subsection numbers for reductions.}\label{fig:1}
\end{wrapfigure}

The two reductions in Figure \ref{fig:1} without algorithm numbers are achieved by following the two arrows that combine to give the same source and target. \textit{Dashed arrows indicate a norm restriction. Each must be executed under either the $\ell_1$, $\ell_2$, or $\ell_{\infty}$-norm.} However, we point out in Subsection \ref{ss:discuss} how the restriction can be alleviated to any $\ell_p$-norm provided we accept additional gap inflation by a constant arbitrarily close to $1$.

The results are summarized by the following table. It uses $m$ and $d$ to denote the maximal magnitude among input integers and the least common denominator of the input vector, respectively. The matrix or vector dimension is $n$, and $p$ defines the norm. Trivial cases that cause logarithms to equal $0$ are ignored. Column descriptions follow.

\vspace{-0.0cm}

\begin{table}[h]
\setlength{\belowcaptionskip}{5pt}
\setlength{\abovecaptionskip}{-12pt}
\begin{tabular}{|>{\centering\arraybackslash}m{1.85cm}|>{\centering\arraybackslash}m{2cm}>{\centering\arraybackslash}m{1.5cm}>{\centering\arraybackslash}m{1.5cm}>{\centering\arraybackslash}m{1.8cm}|}
\hline
\vspace{0.03cm}\textbf{Reduction} & \vspace{0.03cm}\textbf{Operations} & \vspace{0.03cm}\textbf{Integers} & \vspace{0.03cm}\textbf{Inflation} & \vspace{0.03cm}\textbf{Calls}\\
\hline
$\textsc{gda}\to\textsc{sap}$ & $n\log m$ & $n\log m$ & $3.06$ & $\lceil\log_2\! d/\alpha N\rceil$\\
$\textsc{sap}\to\textsc{svp}$ & $(n+\log m)^2$ & $n\log m$ & $1$ & $1$\\
$\textsc{gda}\to\textsc{svp}$ & $(n+\log m)^2$ & $n\log m$ & $3.06$ & $\lceil\log_2\! d/\alpha N\rceil$\\
$\textsc{svp}\to\textsc{gda}$ & $n^4\log mn$ & $n^4\log mn$ & $n^{1/p}$ & 1\\
$\textsc{svp}\to\textsc{sap}$ & $n^4\log mn$ & $n^4\log mn$ & 1 & 1\\
$\textsc{sap}\to\textsc{gda}$ & $n^5\log m$ & $n^5\log m$ & $n^{1/p}$ & 1\\
\hline
\end{tabular}
\caption{Summary of reduction complexities and gap inflations.}
\label{tab:results}
\end{table}

\vspace{0.15cm}

\noindent\textbf{Operations:} Big-$O$ bound on the number of arithmetic operations per oracle call. 

\noindent\textbf{Integers:} Big-$O$ bound on the length of integers used throughout the reduction.

\noindent\textbf{Inflation:} Maximum gap inflation. For example, to solve a good Diophantine approximation problem with some $\alpha$ using Algorithm \ref{alg:diophantus}, \textsc{sap} is called with $\alpha/3.06$.

\noindent\textbf{Calls:} Upper bound on the number of required calls to the oracle.

\section{\label{sec:2}Versions of simultaneous approximation}

\subsection{SAP to GDA}\label{ss:lose} Rather than give a complete reduction from a simultaneous approximation problem to \textsc{gda}, which is postponed until the end of Subsection \ref{ss:reduction}, the purpose of this subsection is to observe a condition on the input that makes these two versions of simultaneous approximation nearly equivalent.

\begin{proposition}\label{prop:equivalent}Suppose the $i^\text{th}$ coordinate of $\bx$ is of the form $x_i=1/d$, where $d\in\bN$ makes $d\bx\in\bZ^n$. Under an $\ell_{p}$-norm, $\textsc{gda}(\alpha,\bx,N)$ solves the simultaneous approximation problem defined by $\alpha n^{1/p}$ and $\bx$ with $N = d/2\alpha$.\end{proposition}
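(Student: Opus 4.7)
The plan is to verify directly that the value $q_0$ returned by $\textsc{gda}(\alpha,\bx,d/(2\alpha))$ meets both requirements of a valid \textsc{sap} output at parameter $\alpha n^{1/p}$: namely $\|\{q_0\bx\}\|_p>0$ and $\|\{q_0\bx\}\|_p\leq\alpha n^{1/p}m_1$, where $m_1:=\min^{\times}_{q\in\bZ}\|\{q\bx\}\|_p$.

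\emph{Preliminaries.} Because $d\bx\in\bZ^n$, the function $q\mapsto\|\{q\bx\}\|_p$ is periodic modulo $d$, and since $|\{-t\}|=|\{t\}|$ for every $t$, its minimum over $\bZ\setminus d\bZ$ is realized at some $q^*\in\{1,\ldots,\lfloor d/2\rfloor\}$. Using $x_i=1/d$ together with $q^*\leq d/2$ gives $|\{q^*x_i\}|=q^*/d$, and reading off the $i$-th coordinate alone produces the lower bound $m_1\geq q^*/d$ that will drive everything else. For the nonzero condition, the \textsc{gda} definition forces $q_0\in[\alpha N]=[d/2]$, so $1\leq q_0<d$; hence $|\{q_0x_i\}|=q_0/d>0$, which gives $\|\{q_0\bx\}\|_p>0$.

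\emph{Gap bound, by cases.} I would then split on whether the SAP minimizer lies in the \textsc{gda} search window. In the first case, $q^*\leq N$, so $q^*\in[N]$ and $\min_{q\in[N]}\|\{q\bx\}\|_p\leq m_1$; the \textsc{gda} guarantee immediately yields $\|\{q_0\bx\}\|_p\leq\alpha m_1\leq\alpha n^{1/p}m_1$. In the second case, $q^*>N$, so the $i$-th coordinate bound gives $m_1>N/d=1/(2\alpha)$, hence $\alpha n^{1/p}m_1>n^{1/p}/2$. On the other hand, every coordinate of $\{q_0\bx\}$ has magnitude at most $1/2$, so $\|\{q_0\bx\}\|_p\leq n^{1/p}\|\{q_0\bx\}\|_\infty\leq n^{1/p}/2$, and the required inequality follows.

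The conceptual step is the second case: the observation that a SAP minimizer lying \emph{outside} the \textsc{gda} window $[N]$ must already be far from $\bZ^n$ in $\ell_p$-norm, simply because the $i$-th coordinate of $q^*\bx$ is forced to have absolute fractional part exceeding $1/(2\alpha)$; this makes the right-hand side $\alpha n^{1/p}m_1$ so large that the crude universal estimate $\|\{q_0\bx\}\|_\infty\leq 1/2$ on the output is enough. The remaining pieces — periodicity, sign symmetry, and the coordinate-wise lower bound — are bookkeeping once this dichotomy is in hand.
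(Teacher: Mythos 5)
Your proof is correct and follows essentially the same route as the paper: the paper phrases your Case 2 as the WLOG reduction "we may assume $\min^\times_q\|\{q\bx\}\|<1/2\alpha$, since otherwise every integer in $[N]$ works," and your Case 1 is exactly its use of the $i$-th coordinate bound $q_{\min}/d\leq\|\{q_{\min}\bx\}\|$ to place the minimizer inside $[N]$. The nonvanishing of $\{q_0\bx\}$ is also handled identically, via $\alpha N<d$ (equivalently $q_0<d$).
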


\begin{proof}Let $q_{\min}\in [d/2]$ be such that $\|\{q_{\min}\bx\}\|$ is the nonzero minimum. A vector's fractional part is in $(-1/2,1/2\hspace{0.04cm}]^n$, making its length at most $n^{1/p}/2$. So we may assume that $\|\{q_{\min}\bx\}\| < 1/2\alpha$, since otherwise every integer in $[N] = [d/2\alpha]$ solves the simultaneous approximation problem defined by $\alpha n^{1/p}$ and $\bx$.

Under an $\ell_p$-norm, $\|\{q_{\min}\bx\}\|$ is an upper bound for its $i^\text{th}$ coordinate, $q_{\min}/d$. Combined with the assumption $\|\{q_{\min}\bx\}\| < 1/2\alpha$, this gives $q_{\min}\in [d/2\alpha]=[N]$, which implies $\min_{q\in[N]}\!\|\{q\bx\}\| \leq \min^{\times}_{q\in\bZ}\!\|\{q\bx\}\|$. And because $\alpha N < d$, it is guaranteed that $\textsc{gda}(\alpha,\bx,N)$ is not a multiple of $d$.\end{proof}

Note that without an assumption on $\bx$ like the one used in this proposition, there is no natural choice for $N$ that makes $\textsc{gda}$ solve a simultaneous approximation problem. If we set it too small, say with $N < d/2$, then $\min_{q\in[N]}\!\|\{q\bx\}\|$ may be unacceptably larger than $\min^\times_{q\in\bZ}\!\|\{q\bx\}\|$, potentially making \textsc{gda}'s approximation poor. If we set it too large, say with $N \geq d/\alpha$, then \textsc{gda} may return $d$, which is not a valid output for the initial simultaneous approximation problem.

To get around this, our strategy is to first reduce a simultaneous approximation problem to $\textsc{svp}$ with Algorithm \ref{alg:lagarias}. Then in Algorithm \ref{alg:agrawal}, which reduces a short vector problem to \textsc{sap}, we are careful to produce an input vector for the oracle that satisfies the hypothesis of Proposition \ref{prop:equivalent} in order to admit \textsc{gda}.

\subsection{GDA to SAP}\label{ss:recover} Let $d$ continue to denote the least common denominator of $\bx$. The problem faced in this reduction is that outputs for a good Diophantine approximation problem are bounded by $\alpha N$, which may be smaller than $d/2$. This leaves no guarantee that $\textsc{sap}(\alpha,\bx)$, call this integer $d_1\in[d/2]$, is a solution. But knowing that $\bx$ is very near a rational vector $\bx_1$ with least common denominator $d_1$ allows us to call \textsc{sap} again, now on $\bx_1$ to get $d_2\in[d_1/2]$. This is the least common denominator of some $\bx_2$ near $\bx_1$, and we continue in this fashion until the output is at most $\alpha N$. To get $d_i\in[d_{i-1}/2]$, we adopt the convention that modular reduction returns an integer with magnitude at most half the modulus.

\vspace{0.105cm}

\begin{algorithm}\caption{A reduction from a good Diophantine approximation problem to multiple calls to \textsc{sap} under a consistent norm.}\label{alg:diophantus}
\begin{flushleft}
\hspace*{\algorithmicindent}\textbf{input:} $\alpha,N\in[1,\infty)$, $\bx=(x_1,...,x_n)\in\bQ^n$\\
\hspace*{\algorithmicindent}\textbf{output:} $q_0\in[\alpha N]$ with $\|\{q_0\bx\}\| \leq \alpha\min_{q\in[N]}\!\|\{q\bx\}\|$
\end{flushleft}
\begin{algorithmic}[1]
    \State $d\gets \lcd(x_1,...,x_n)>0$
    \While{$d>\alpha N$}
        \State $d\gets |\textsc{sap}(\alpha/3.06,\bx)\,\text{mod}\,d|$\Top{good, but large denominator}
        \State $\bx\gets \bx-\{d\bx\}/d$\Longtop{now $\lcd(\bx)=d$, at most half of}
        \Statex \vspace{-\baselineskip}
    \EndWhile
    \State \Return $d$\Bottom{the previous iteration's $\lcd$}
\end{algorithmic}
\end{algorithm}

\vspace{-0.105cm}

\begin{proposition}\label{prop:diop_works}The output of Algorithm \ref{alg:diophantus} solves the initial good Diophantine approximation problem.\end{proposition}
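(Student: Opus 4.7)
The plan is to verify three claims: the loop terminates, the returned $d$ lies in $[\alpha N]$, and $d$ satisfies the approximation bound. Write $(d^{(i)}, \bx^{(i)})$ for the values after iteration $i$, with $\bx^{(0)} = \bx$ and $d^{(0)} = \lcd(\bx)$. The first two claims follow from the modular-reduction convention: since $|d^{(i)}| \leq d^{(i-1)}/2$ each pass, the loop exits after at most $\lceil \log_2(d^{(0)}/\alpha N) \rceil$ iterations with $0 < d^{(k)} \leq \alpha N$. Each \textsc{sap} call remains well-posed because $\bx^{(i-1)}$ is non-integral---a property inherited from $\bx^{(0)}$ via the updates, since otherwise the running $d$ would have dropped to $1$ and the guard would already have failed.

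The core of the proof is a telescoping estimate. The update $\bx^{(i)} = \bx^{(i-1)} - \{d^{(i)} \bx^{(i-1)}\}/d^{(i)}$ enforces $d^{(i)} \bx^{(i)} \in \bZ^n$ and $\|\bx^{(i)} - \bx^{(i-1)}\| = \delta_i/d^{(i)}$, where $\delta_i := \|\{d^{(i)} \bx^{(i-1)}\}\|$. Since $d^{(k)} \bx^{(k)} \in \bZ^n$ and $|\{y\}| \leq |y|$ coordinatewise,
\[
\|\{d^{(k)} \bx^{(0)}\}\| = \|\{d^{(k)} (\bx^{(0)} - \bx^{(k)})\}\| \leq d^{(k)} \sum_{i=1}^k \frac{\delta_i}{d^{(i)}} \leq \sum_{i=1}^k 2^{i-k} \delta_i,
\]
using $d^{(k)}/d^{(i)} \leq 2^{i-k}$. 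To bound $\delta_i$ in terms of $\mu := \min_{q \in [N]} \|\{q \bx^{(0)}\}\|$, I would substitute the minimizer $q^* \in [N]$ of $\mu$ into the \textsc{sap} guarantee; the triangle inequality and the drift identity yield $\|\{q^* \bx^{(i-1)}\}\| \leq \mu + q^* \sum_{j<i} \delta_j/d^{(j)}$, whence
\[
\delta_i \leq \frac{\alpha}{3.06}\Big(\mu + N \sum_{j=1}^{i-1} \frac{\delta_j}{d^{(j)}}\Big).
\]

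The main obstacle is to substitute this recursion into $\sum_{i} 2^{i-k} \delta_i$ and show the total is at most $\alpha \mu$. Two geometric decays work in concert: the factor $2^{i-k}$ from the telescoping, and the estimate $N/d^{(j)} < 2^{-(k-1-j)}/\alpha$ which follows from the loop invariant $d^{(k-1)} > \alpha N$ combined with successive halving. The constant $3.06$ is calibrated precisely so that, after summing the nested geometric series (or verifying an inductive ansatz $\delta_i \leq B$ for an explicit $B$), the contraction closes the estimate at $\alpha \mu$. Minor edge cases, such as when $q^*$ is itself a common denominator of some $\bx^{(i-1)}$ (so $\{q^* \bx^{(i-1)}\} = 0$), are handled by replacing it with an admissible nearby test query with negligible effect on the bound.
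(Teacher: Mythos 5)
Your overall structure matches the paper's: both use the same telescoping of $\|\{d^{(k)}\bx^{(0)}\}\|$ through the sequence $\bx^{(i)}$, the identity $\|\bx^{(i-1)}-\bx^{(i)}\| = \delta_i/d^{(i)}$, and the same two geometric decays ($d^{(k)}/d^{(i)} \leq 2^{i-k}$ from halving, and $N/d^{(j)} < 2^{j-k+1}/\alpha$ from the loop guard). The paper introduces the intermediaries $\lambda_i = \min_{q\in[N]}\|\{q\bx^{(i)}\}\|$ and derives a clean one-step recursion $\lambda_i < \lambda_{i-1}(1 + 2^{i-k+1}/c)$ via the step-$(i-1)$ minimizer $q_{\min}$, whereas you freeze a single test point $q^*$ and unroll the whole drift at once. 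Once unwound these are algebraically equivalent and yield the same infinite nested series, so the difference is bookkeeping rather than substance.

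The genuine gap is the last step, where you assert rather than verify that $c=3.06$ closes the recursion. Moreover, one of your two suggested closure routes fails: a uniform ansatz $\delta_i \le B$ only forces $B(1-2/c)\geq \alpha\mu/c$ and then $\sum_i 2^{i-k}\delta_i < 2\alpha\mu/(c-2)$, which produces $\alpha\mu$ only for $c\geq 4$, not $3.06$. To reach $3.06$ you must let the bound grow with $i$: define $a_i$ by $a_1=\alpha\mu/c$ and $a_i = a_{i-1}\bigl(1+2^{i-k}/c\bigr)$, prove $\delta_i\leq a_i$ by induction from your recursion, and then observe that $\sum_{i=1}^k 2^{i-k}a_i$ reindexes (with $m=k-i$, $l=k-j$) to $(\alpha\mu/c)\sum_{m\geq 0}2^{-m}\prod_{l\geq m}\bigl(1+2^{-l}/c\bigr)$, which is exactly the infinite series the paper evaluates to justify $c=3.06$. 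Without carrying out this computation, the claim that the returned $d$ satisfies the approximation bound is unestablished, and that is the only nontrivial content of the proposition.

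Separately, the edge case you raise ($\{q^*\bx^{(i-1)}\}=0$) cannot occur, so the proposed ad hoc "nearby test query" patch is unnecessary and, as stated, unjustified. The loop guard gives $d^{(i-1)} > \alpha N \geq N \geq q^*$, and the least common denominator of $\bx^{(i-1)}$ is exactly $d^{(i-1)}$, so $q^*\bx^{(i-1)}\notin\bZ^n$ automatically. This is the same fact that licenses the paper's step $\min^{\times}_{q\in\bZ}\|\{q\bx_{i-1}\}\|\leq\lambda_{i-1}$.
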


\begin{proof}Let $d_i$ and $\bx_i$ denote the values of $d$ and $\bx$ after $i$ \textbf{while} loop iterations have been completed. In particular, $d_0$ and $\bx_0$ are defined by the input. Also let $I+1$ be the total number of iterations executed, so the output is $d_{I+1}$. 

The triangle inequality gives \begin{equation}\label{eq:1}\|\{d_{I+1}\bx\}\|\leq \|\{d_{I+1}\bx_I\}\| + d_{I+1}\sum_{i=1}^{I}\|\bx_i - \bx_{i-1}\|.\end{equation} With $\lambda_i=\min_{q\in[N]}\!\|\{q\bx_i\}\|$, the choice of $d_{I+1}$ bounds the first summand by $\alpha\lambda_{I}/c$, where $c=3.06$ in the algorithm but is left undetermined for now. Similarly, the choice of $d_i=\textsc{sap}(\alpha/c,\bx_{i-1})$ and the fact that $d_{i-1} > \alpha N\geq N$ make \begin{equation}\label{eq:2}\|\bx_i - \bx_{i-1}\|=\frac{\|\{d_i\bx_{i-1}\}\|}{d_i}\leq \frac{\alpha\min^\times_{q\in\bZ}\!\|\{q\bx_{i-1}\}\|}{cd_i}\leq \frac{\alpha\lambda_{i-1}}{cd_i}.\end{equation} So to bound (\ref{eq:1}) it must be checked that the $\lambda_i$'s are not too large. To this end, fix some $i\leq I$ and let $q_{\min}\in[N]$ satisfy $\|\{q_{\min}\bx_{i-1}\}\| = \lambda_{i-1}$. Then we have the following upper bound on $\lambda_i$, where the three inequalities are due to the triangle inequality, inequality (\ref{eq:2}), and $q_{\min}\leq N < d_I/\alpha \leq d_i/2^{I-i}\alpha$, respectively: $$ \|\{q_{\min}\bx_i\}\|\leq \lambda_{i-1} + q_{\min}\|\bx_i - \bx_{i-1}\|\leq \lambda_{i-1}\left(1 + \frac{\alpha q_{\min}}{cd_i}\right)< \lambda_{i-1}\left(1 + \frac{1}{2^{I-i}c}\right).$$ Inductively, this gives \begin{equation}\label{eq:3}\lambda_i<\lambda_0\prod_{j=1}^i\left(1 + \frac{1}{2^{I-j}c}\right).\end{equation} Now the three numbered inequalities above can be combined to get $$\|\{d_{I+1}\bx\}\|\leq \frac{\alpha d_{I+1}}{c}\sum_{i=0}^{I}\frac{\lambda_i}{d_{i+1}}\leq \frac{\alpha}{c}\sum_{i=0}^I\frac{\lambda_i}{2^{I-i}}\leq \frac{\alpha\lambda_0}{c}\sum_{i=0}^I\frac{1}{2^{I-i}}\prod_{j=1}^i\left(1 + \frac{1}{2^{I-j}c}\right).$$ Thus the output approximation quality, $\|\{d_{I+1}\bx\}\|$, is at most $\alpha\min_{q\in[N]}\!\|\{q\bx\}\|$ $=\alpha\lambda_0$ provided $c$ satisfies $$1\geq \frac{1}{c}\sum_{i=0}^\infty\frac{1}{2^i}\prod_{j=i}^\infty\left(1 + \frac{1}{2^jc}\right).$$ This justifies our choice of $c=3.06$ in line 3.\end{proof}

\begin{proposition}\label{prop:diop_speed}Let $m>1$ be the maximum magnitude among integers defining $\bx$, and let $d>1$ be its least common denominator. The reduction in Algorithm \ref{alg:diophantus} requires an initial $O(n\log m)$ operations plus $O(n)$ operations for each call to \textsc{sap}, of which there are at most $\lceil \log_2(d/\alpha N)\rceil$, on integers of length $O(n\log m)$.\end{proposition}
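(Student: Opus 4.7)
My plan is to verify each of the four quantities in turn: the initial operation count, the per-iteration cost, the iteration bound, and the integer length bound.

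For the initial $\lcd$ computation, I would iterate the LCM update $d\gets d\cdot b_k/\gcd(d,b_k)$ over the $n$ denominators $b_k$ of $\bx$'s entries. Since each $b_k\leq m$, each Euclidean $\gcd(d,b_k)$ finishes in $O(\log m)$ operations---after a single reduction mod $b_k$, both operands are at most $b_k$---yielding $O(n\log m)$ total. For the per-iteration cost of the loop (excluding the oracle call), the modular reduction on line 3 is constant, and the update on line 4 decomposes into $n$ coordinate-wise steps---multiply by $d$, round to nearest integer, divide, subtract---each of constant cost, for $O(n)$ total.

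For the iteration bound, the decisive observation is that $d$ at least halves each iteration: the half-modular convention on line 3 guarantees $|a\bmod d|\leq d/2$. Starting from $d_0\leq d$ and terminating once $d\leq\alpha N$, at most $\lceil\log_2(d/\alpha N)\rceil$ iterations occur.

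The main bookkeeping---the closest thing to an obstacle---is the integer length bound. The initial $d\leq b_1\cdots b_n\leq m^n$ has length $O(n\log m)$, and subsequent $d_i$ only decrease. To bound the numerators of $\bx_i$ written over denominator $d_i$, I would first assume WLOG that $|x_k|\leq 1/2$ by replacing $\bx$ with $\{\bx\}$ at the start, which leaves every $\{q\bx\}$ unchanged. The recurrence $|x_{i,k}|\leq|x_{i-1,k}|+1/(2d_i)$ then shows each entry's magnitude grows by at most $1/2$ per iteration, so remains $O(\log(d/\alpha N))=O(n\log m)$ throughout. Consequently the numerators $d_i x_{i,k}$ have magnitude at most $d_i\cdot O(n\log m)$, still of length $O(n\log m)$.
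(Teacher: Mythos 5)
Your proof is correct and follows the same strategy as the paper, but with more detail: the paper's own proof only explicitly addresses the $\lcd$ computation and the iteration count, while you additionally supply the per-iteration $O(n)$ cost and the integer-length bound on the numerators during the loop (via the recurrence $|x_{i,k}|\leq|x_{i-1,k}|+1/(2d_i)$ after the WLOG normalization $|x_k|\leq 1/2$). These additional steps are sound and actually needed to fully justify the claim that all integers stay of length $O(n\log m)$.
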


\begin{proof}Repeatedly applying the Euclidean algorithm computes $d$ with $O(n\log m)$ operations on integers of length $O(n\log m)$. Modular reduction in line 3 decreases each successive least common denominator by at least a factor of $1/2$. This bounds the number of \textbf{while} loop iterations by $\lceil \log_2(d/\alpha N)\rceil$.\end{proof}

\section{\label{sec:3}Reducing to svp}

First we restrict attention to Definition \ref{def:sap}'s version of simultaneous approximation (\textsc{sap}) in Algorithm \ref{alg:lagarias}. Then we will compare the combination with Algorithm \ref{alg:diophantus} to Lagarias' reduction in \cite{lagarias} from Definition \ref{def:gda}'s version (\textsc{gda}).

\subsection{SAP to SVP}\label{ss:start} Here we replace the $n+1$ vectors associated to simultaneous approximation, namely $\bx$ and a basis for $\bZ^n$, with $n$ vectors generating the same lattice. There are algorithms for which this is a byproduct, like Pohst's modified (to account for linearly dependent vector inputs) LLL algorithm \cite{pohst} or Kannan and Bachem's Hermite normal form algorithm \cite{kannan}. But as a consequence of achieving additional basis properties, they are more complicated and require more operations than necessary. We briefly present an alternative because the improved time complexity is relevant to the next subsection.

\vspace{0.115cm}

\begin{algorithm}\caption{A gap-preserving reduction from a simultaneous approximation problem to one call to \textsc{svp} under a consistent norm.}\label{alg:lagarias}
\begin{flushleft}
\hspace*{\algorithmicindent}\textbf{input:} $\alpha\in[1,\infty)$, $\bx=(x_1,...,x_n)\in\bQ^n$\\
\hspace*{\algorithmicindent}\textbf{output:} $q_0\in\bZ$ with $0 < \|\{q_0\bx\}\| \leq \alpha\min^{\times}_{q\in\bZ}\!\|\{q\bx\}\|$
\end{flushleft}
\begin{algorithmic}[1]
    \State $d\gets \lcd(x_1,...,x_n)$
    \State\Longstate{$x_n\gets x_n+a$ with $a$ an integer that makes}\Longtop{make sure $d\bx$ extends to a basis}
    \Statex $\text{gcd}(dx_1,...,dx_{n-1},d(x_n+a))=1$\Bottom{for $\bZ^n$}
    \State\Longstate{$M\hspace{-0.08cm}\gets\hspace{-0.03cm}$ matrix in $\text{SL}_n(\bZ)$ with first column $d\bx$}
    \State\Longstate{$M\gets M$ with last $n-1$ columns scaled by $d$}\Longtop{generates scaled original lattice}
    \State \Return $\textsc{svp}(\alpha,M)_1$\Top{first coordinate is a solution}
\end{algorithmic}
\end{algorithm}

\vspace{-0.115cm}

\begin{proposition}\label{prop:lag_works}The output of Algorithm \ref{alg:lagarias} solves the initial simultaneous approximation problem.\end{proposition}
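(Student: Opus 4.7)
The plan is to show that the column lattice $L'$ of the matrix $M$ after line 4 coincides with $dL$, where $L = \bZ\bx + \bZ^n$ is the lattice naturally associated with simultaneous approximation, and then to read $q_0$ off as the first coordinate of the \textsc{svp} output.

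First I would verify that the integer shift $x_n \gets x_n + a$ in line 2 leaves $\{q\bx\}$ unchanged for every $q \in \bZ$ (the added integer $qa\be_n$ is absorbed by the fractional-part operation), so the simultaneous approximation instance is effectively the same; the shift only exists to make $d\bx$ a primitive integer vector that extends to a $\bZ$-basis $\{d\bx, \bc_2, \ldots, \bc_n\}$ of $\bZ^n$ in line 3. The identification $L' = dL$ then follows from the basis identity $\bZ^n = \bZ(d\bx) + \bZ\bc_2 + \cdots + \bZ\bc_n$ by writing $L' = \bZ(d\bx) + d(\bZ\bc_2 + \cdots + \bZ\bc_n) = \bZ(d\bx) + d\bZ^n = d(\bZ\bx + \bZ^n)$.

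Next I would compare shortest vectors. For any $q$ with $d \nmid q$, the vector $d\{q\bx\}$ lies in $L'$ and has norm $d\|\{q\bx\}\|$, so $\min^\times_{\bq}\|M\bq\| \leq d\min^\times_q\|\{q\bx\}\|$. Given the oracle output $\bq_0 = (q_0, q_2, \ldots, q_n)$, the decomposition $M\bq_0 = d(q_0\bx + \bp)$ with $\bp := \sum_{i\geq 2} q_i\bc_i \in \bZ^n$, combined with the fact that $\{q_0\bx\}$ minimizes $\|q_0\bx + \bp'\|$ over $\bp' \in \bZ^n$, yields the approximation bound $\|\{q_0\bx\}\| \leq \|M\bq_0\|/d \leq \alpha\min^\times_q\|\{q\bx\}\|$.

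The hard part will be establishing the strict inequality $\|\{q_0\bx\}\| > 0$, equivalently $d \nmid q_0$. The key observation is that $d \mid q_0$ would place $M\bq_0$ in $d\bZ^n \setminus \{0\}$ and hence force $\|M\bq_0\| \geq d$; ruling this out requires pitting this lower bound against the \textsc{svp} guarantee and the existence of the short lattice vector $d\{q^*\bx\} \in L'$ for an optimal $q^*$, whose $M$-coordinates have first entry congruent to $q^*$ modulo $d$ and therefore nonzero modulo $d$. Pushing that comparison far enough to exclude every $\bq_0$ with $d \mid q_0$ --- precisely where the primitivity of $d\bx$ does real work beyond mere bookkeeping --- is the delicate step.
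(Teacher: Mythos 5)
Your plan matches the paper's proof in all essentials: identify the column lattice of the final $M$ with $d(\bZ\bx+\bZ^n)$ via the $\mathrm{SL}_n(\bZ)$ extension of $d\bx$, observe that the integer shift $x_n\gets x_n+a$ does not change the problem, note that the last $n-1$ columns lie in $d\bZ^n$ so that $M\bq_0\equiv q_0\,d\bx\pmod{d\bZ^n}$, and conclude that $d\{q_0\bx\}$ is a coordinate-wise minimal representative of that coset, giving $\|\{q_0\bx\}\|\leq\|M\bq_0\|/d\leq\alpha\min^{\times}_{q}\|\{q\bx\}\|$. This is exactly the argument in the paper.

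On the positivity step you are right to be suspicious, and you are being \emph{more} careful than the paper, which simply asserts that $\textsc{svp}(\alpha,M)_1$ is the sought integer. But your sketched resolution does not close the gap. If $d\mid q_0$ then $M\bq_0\in d\bZ^n\setminus\{0\}$, so $\|M\bq_0\|\geq d$; the \textsc{svp} guarantee together with $\min^{\times}_{\bq}\|M\bq\|\leq d\min^{\times}_{q}\|\{q\bx\}\|$ yields only $\alpha\min^{\times}_{q}\|\{q\bx\}\|\geq 1$, which is \emph{not} a contradiction. Under the $\ell_\infty$-norm this condition is simply $\alpha\geq 1/\min^\times_q\|\{q\bx\}\|_\infty\geq 2$, so an adversarial \textsc{svp} oracle with $\alpha\geq 2$ may legitimately return a decomposition of, say, $d\be_1$, whose first coordinate is a multiple of $d$. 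The primitivity of $d\bx$ has no bearing here: it is already consumed by the existence of the $\mathrm{SL}_n(\bZ)$ completion in line 3 and does not constrain the first coordinate of the oracle's output modulo $d$. What is actually needed is a separate argument for the regime $\alpha\min^{\times}_{q}\|\{q\bx\}\|\geq 1$; under $\ell_\infty$ any $q_0$ with $d\nmid q_0$ is then automatically valid since $\|\{q_0\bx\}\|_\infty\leq 1/2<1\leq\alpha\min^{\times}_{q}\|\{q\bx\}\|$, so a trivial fallback closes that case, but the algorithm and proof as written do not implement or mention such a fallback.
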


\begin{proof}First note that $a$ in line 2 exists. As $d$ is the \textit{least} common denominator, $\text{gcd}(dx_1,...,dx_n)$ and $d$ are coprime. So take $a$ to be divisible by those primes which divide $\text{gcd}(dx_1,...,dx_{n-1})$ but not $dx_n$. Also, since $a$ is an integer, the new value of $\bx$ defines the same simultaneous approximation problem as the input.

Coprime entries means $d\bx$ extends to some $M\in\text{SL}_n(\bZ)$. (One method is mentioned in the next proof.) The columns of $dM$ generate $d\bZ^n$, so the same is true if we only scale the last $n-1$ columns by $d$. In particular, the columns of the new $M$ in line 4 generate $d\bx$ and $d\bZ^n$, which in turn generate each column. Thus $M$ defines a basis for the original simultaneous approximation lattice scaled by $d$.

Finally, the last $n-1$ columns of $M$ are vectors in $d\bZ^n$, so that $M\textsc{svp}(\alpha,M)\equiv \textsc{svp}(\alpha,M)_1d\bx\text{ mod }d\bZ^n$. This verifies that $\textsc{svp}(\alpha,M)_1$ is the integer we seek.\end{proof}

\begin{proposition}\label{prop:lag_speed}Let $m>1$ be the maximum magnitude among integers defining $\bx$. The reduction in Algorithm \ref{alg:lagarias} requires $O((n+\log m)^2)$ operations on integers of length $O(n\log m)$.\end{proposition}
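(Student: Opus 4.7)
The plan is a line-by-line complexity audit of Algorithm \ref{alg:lagarias}, combined with a uniform bound on the bit length of every integer that appears. First I would establish integer sizes: each $x_i = p_i/q_i$ with $|p_i|, |q_i| \le m$, so $d = \mathrm{lcm}(q_1, \ldots, q_n) \le m^n$ has length $O(n \log m)$; the values $dx_i$ are integers of magnitude at most $md \le m^{n+1}$; and every entry of the matrix $M$ built in line 3, together with its scaled version in line 4, stays within a polynomial factor of these, hence of length $O(n\log m)$.

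Next I would bound operations line by line. Line 1 takes $O(n \log m)$ operations by the same argument used in the proof of Proposition \ref{prop:diop_speed}: each Euclidean step for the running lcm pits the partial lcm against a single denominator of length $O(\log m)$. Line 2 requires $g = \gcd(dx_1, \ldots, dx_{n-1})$ together with a suitable $a$; using the characterization in the proof of Proposition \ref{prop:lag_works}, $a$ can be taken to be the part of $g$ coprime to $dx_n$, obtainable by a short sequence of auxiliary gcds that can be folded into the matrix construction of line 3. Line 3 builds $M \in \mathrm{SL}_n(\bZ)$ with first column $d\bx$ via $n-1$ extended-Euclidean reductions between the accumulating first column and each standard basis vector; each reduction triggers a unimodular column update of size $O(n)$, contributing $O(n^2)$ in matrix maintenance, while the Euclidean divisions amortize against the monotone decrease of the running gcd. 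Line 4 scales the last $n-1$ columns of $M$ by $d$ at cost $O(n^2)$ multiplications.

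Summing yields $O(n \log m) + O(n^2) = O((n + \log m)^2)$, and every operation is on integers of length $O(n \log m)$ as already verified.

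The main obstacle will be the amortized accounting of the Euclidean divisions in line 3: a naive per-reduction bound of $O(n \log m)$ across $n-1$ reductions would give only the weaker $O(n^2 \log m)$. One must exploit that the total bit-length lost by the running gcd across all reductions is $O(\log(dm)) = O(n \log m)$, which pins the cumulative division count at $O(n + n \log m)$ rather than the product of per-call depth and number of calls.
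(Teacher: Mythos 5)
Your line-by-line audit has the right skeleton, and the amortization idea for line 3 is a genuine alternative to the paper's argument. The paper instead observes that $\gcd(d_{i-1}x_1,\ldots,d_{i-1}x_{i-1})$, where $d_{i-1}=\lcd(x_1,\ldots,x_{i-1})$, is coprime to $d_{i-1}$ by minimality and hence divides the gcd of the numerators of $x_1,\ldots,x_{i-1}$, so it is at most $m$; this yields a \emph{uniform} $O(\log m)$ bound on each Euclidean call, one per column. Your version charges divisions against the $O(n\log m)$ bits lost by the decreasing running gcd $\gcd(dx_1,\ldots,dx_i)$. Both routes give $O(n\log m)$ divisions in total; yours avoids the numerator-gcd lemma, while the paper's per-call bound doubles as the input to the line-2 estimate.

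Two pieces of your argument need shoring up. First, the $O(n^2)$ for ``matrix maintenance'' holds only because each new column of $M$ can be written down in closed form from a single Bezout pair: the paper sets column $i$ equal to $\bigl(b_1 dx_1/g_{i-1},\ldots,b_1 dx_{i-1}/g_{i-1},b_2,0,\ldots,0\bigr)$ where $b_2 g_{i-1}-b_1 dx_i = g_i$, so the cost is $O(n)$ per column independent of how many Euclidean divisions produced $b_1,b_2$. Your phrasing ``each reduction triggers a unimodular column update of size $O(n)$'' invites the HNF-style reading in which a column operation happens once per Euclidean division, which would cost $O(n)\cdot O(n\log m)=O(n^2\log m)$ and blow the budget. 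The closed-form column is also what delivers the $O(n\log m)$ bound on entry lengths, which you assert but do not derive from your construction. Second, line 2 is waved off. The paper builds $a$ by iterating $a\gets a/\gcd(a,dx_n)$ starting from $a=\gcd(dx_1,\ldots,dx_{n-1})$; the reason this fits in the budget is that the starting value equals $(d/d_{n-1})\gcd(d_{n-1}x_1,\ldots,d_{n-1}x_{n-1})\le m^2$, giving $O(\log m)$ halvings at $O(\log m)$ divisions each, i.e.\ $O((\log m)^2)$. ``A short sequence of auxiliary gcds'' does not establish the $m^2$ bound that makes this step cheap, and that bound is not automatic.
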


\begin{proof}As with Algorithm \ref{alg:diophantus}, line 1 requires $O(n\log m)$ operations on and resulting in integers of length $O(n\log m)$.

Skipping line 2 for now, the $i^{\text{th}}$ column (for $i\geq 2$) of $M$ in line 3 can be set to $$\left(\frac{b_1dx_1}{\text{gcd}(dx_1,...,dx_{i-1})},...,\frac{b_1dx_{i-1}}{\text{gcd}(dx_1,...,dx_{i-1})},b_2,0,...,0\right)$$ (transposed), where $b_2\text{gcd}(dx_1,...,dx_{i-1})-b_1dx_i = \text{gcd}(dx_1,...,dx_i)$. The determinant of the top, left $i\times i$ minor is then $\text{gcd}(dx_1,...,dx_i)$ by induction. To find $b_1$ and $b_2$ we execute the Euclidean algorithm on $\text{gcd}(d_ix_1,...,d_ix_{i-1})$ and $d_ix_i$, where $d_i=\text{lcd}(x_1,...,x_i)$. But $\text{gcd}(d_ix_1,...,d_ix_{i\hspace{-0.02cm}-\hspace{-0.02cm}1})$ is at most $m$ times $\text{gcd}(d_{i\hspace{-0.02cm}-\hspace{-0.02cm}1}x_1,...,d_{i\hspace{-0.02cm}-\hspace{-0.02cm}1}x_{i\hspace{-0.02cm}-\hspace{-0.02cm}1})$, which divides the greatest common divisor of the numerators of $x_1,...,x_{i-1}$. So for each $i$ the Euclidean algorithm needs $O(\log m)$ operations.

Before computing the last column of $M$, we find $a$ in line 2 to ensure a determinant of $1$. As discussed in the last proof, we can start with $a=\text{gcd}(dx_1,...,dx_{n-1})$ and replace it with $a / \text{gcd}(a,dx_n)$ until nothing changes. This requires $O(\log a) = O(\log m)$ executions of the Euclidean algorithm, each taking $O(\log m)$ operations.

Scaling all but the first column by $d$ in line 4 takes $O(n^2)$ operations.\end{proof}

We remark that this algorithm adapts to inhomogeneous forms of these problems. To find $q_0\in\bZ$ with $\|\{q_0\bx - \by\}\| \leq \alpha\min_{q\in\bZ}^{\times}\!\|\{q\bx - \by\}\|$ when $q\bx - \by\in \bZ^n$ has no solution, we can perform the same reduction and finish by calling an oracle which solves the approximate \textit{closest} vector problem defined by $\alpha$, $M$, and $d\by$.

\subsection{GDA to SVP}\label{ss:lagarias} {Combining Algorithms \ref{alg:diophantus} and \ref{alg:lagarias} gives an alternative to the Lagarias reduction from good Diophantine approximation to \textsc{svp} in \cite{lagarias}. We execute Algorithm \ref{alg:diophantus}, but use Algorithm \ref{alg:lagarias} to compute $\textsc{sap}(\alpha/3.06,\bx)$ in line 3. By Proposition \ref{prop:diop_speed}, this requires at most $\lceil \log_2(d/\alpha N)\rceil$ calls to $\textsc{svp}$. And Proposition \ref{prop:lag_speed} states that each call requires $O((n+\log m)^2)$ operations on integers of length $O(n\log m)$.\unskip\parfillskip 0pt \par}

Recall that switching from $\ell_{2}$ to $\ell_{\infty}$ decreases a nonzero norm by at most a factor of $1/\sqrt{n}$. In particular, by executing this combination of Algorithms \ref{alg:diophantus} and \ref{alg:lagarias} with respect to the $\ell_2$-norm, we get an $\ell_{\infty}$ solution to the initial good Diophantine approximation problem provided we use $\alpha/3.06\sqrt{n}$ for $\textsc{svp}$.

Lagarias achieves this reduction with the now well-known trick from \cite{lenstra} of reducing the lattice generated by $\bZ^n$ and $\bx$, bumped up a dimension by putting $0$ in every $(n+1)^{\text{th}}$ coordinate but $\bx$'s. The ideal value for the last coordinate of $\bx$, which is guessed at using $\lfloor n+\log_2dN\rfloor$ calls of the form $\textsc{svp}(\alpha/\sqrt{5n}, M)$ for varying $M$, is $\min_{q\in[N]}\!\|\{q\bx\}\|/N$. (The gap inflation approaches $\sqrt{n}$ as our guesses get better.) The Lagarias reduction requires an initial $O(n\log m)$ arithmetic operations to compute the least common denominator, then only one additional operation per call. The integers involved have input length $O(\log m^nN)$. 

Whether the benefit of fewer calls to \textsc{svp} outweighs the increased operations per call depends on the complexity of the oracle. Ours is an asymptotic improvement when the number of operations performed by \textsc{svp} exceeds $O((n+\log m)^2)$. 

\vspace{0.08cm}

\section{\label{sec:4}Reducing to gda or sap}

\vspace{-0.06cm}

We focus first on the reduction to \textsc{sap}.

\subsection{Intuition}\label{ss:intuition} Consider an input matrix $M\in\M_n(\bZ)$ for a short vector problem. Let $d = \det M$, and let $\be_1,...,\be_n$ denote the standard basis vectors for $\bZ^n$. If there were one vector, call it $\bb\in \bZ^n$, for which the set $\{M\bb, d\be_1,...,d\be_n\}$ generated the columns of $M$, our reduction would just amount to finding it. This is exactly the setup for simultaneous approximation: $n+1$ vectors, $n$ of which are scaled orthonormal. A solution could be obtained by doing simultaneous approximation on $M\bb/d$, scaling the resulting short vector by $d$, and applying $M^{-1}$ (to comply with Definition \ref{def:svp}). Unfortunately, unless $n\leq 2$ or $d = \pm1$, such a $\bb$ does not exist. Indeed, the adjugate matrix, $\adj\, M = d\,M^{-1}$, has at most rank 1 over $\bZ/p\bZ$ for a prime $p$ dividing $d$. So at least $n-1$ additional vectors are required to have full rank modulo $p$, a prerequisite to having full rank over $\bQ$. But asking that $M\bb$ generate the columns of $M$ alongside $d\be_1,...,d\be_n$ is equivalent to asking that $\bb$ generate $\bZ^n$ alongside the columns of $\adj\,M$. 

What mattered is the matrix with columns $d\be_1,...,d\be_n$ being scaled orthonormal. As such, multiplying by it or its inverse has no effect on a vector's relative length. So we plan to find a different set of $n$ column vectors---a set for which just one additional $M\bb$ is needed to generate the original lattice---which is nearly scaled orthonormal, making the effect of its corresponding matrix multiplication on $\alpha$ negligible. The initial short vector problem becomes a search for an integer combination of $M\bb$ and these columns, say $\bc_1,...,\bc_n$. We can then solve the simultaneous approximation problem defined by $\alpha$ and $[\bc_1\,\cdots\,\bc_n]^{-1}M\bb$. This works as long as multiplying by $[\bc_1\,\cdots\,\bc_n]$ changes the ratio between the lengths of the shortest vector and our output by less than whatever is afforded by the fact that lattice norms form a discrete set. 

An arbitrary lattice may have all of its scaled orthonormal sublattices contained in $d\bZ^n$. So as candidates for the matrix $[\bc_1\,\cdots\,\bc_n]$, we look for something of the form $cd\,\text{Id} + MA = M(c\,\adj\,M + A)$ for some $c\in\bZ$ and $A\in\M_n(\bZ)$. If the entries of $A$ are sufficiently small, then multiplication by $cd\,\text{Id} + MA$ has a similar effect on relative vector norms as multiplying by $cd\,\text{Id}$, which has no effect.

We will tailor our choice of $c$ and $A$ so that a coordinate of the simultaneous approximation vector, $(c\,\adj\,M + A)^{-1}\bb$, is $1/\det (c\,\adj M + A)$. This admits Proposition \ref{prop:equivalent} and hence \textsc{gda}.

\subsection{SVP to GDA or SAP}\label{ss:reduction} Algorithm \ref{alg:agrawal} uses the following.

\begin{notation}\renewcommand\arraystretch{1.1}\label{def:matrix}For polynomials $f_1=\sum_{i}f_{1,i}x^i$ and $f_2=\sum_{i}f_{2,i}x^i$ with maximum degree $d$, let $C(f_1,f_2)$ denote the matrix of their coefficients, $$\begin{bmatrix}f_{1,d} &  & 0 & f_{2,d} &  & 0\\ \raisebox{0pt}[0.8\height][0.3\height]{$\vdots$} & \raisebox{0pt}[0.8\height][0.3\height]{$\ddots$} & & \raisebox{0pt}[0.8\height][0.3\height]{$\vdots$} & \raisebox{0pt}[0.8\height][0.3\height]{$\ddots$} & \\ f_{1,1} & \cdots & f_{1,d} & f_{2,1} & \cdots & f_{2,d}\\ f_{1,0} & \cdots & f_{1,d-1} & f_{2,0} & \cdots & f_{2,d-1} \\ & \raisebox{0pt}[0.8\height][0.3\height]{$\ddots$} & \raisebox{0pt}[0.8\height][0.3\height]{$\vdots$} & & \raisebox{0pt}[0.8\height][0.3\height]{$\ddots$} & \raisebox{0pt}[0.8\height][0.3\height]{$\vdots$} \\ 0 & & f_{1,0} & 0 & & f_{2,0}\end{bmatrix}.$$\end{notation}

The matrix above can determine when $f_1$ and $f_2$ are coprime over $\bQ(x)$ in lieu of polynomial long division, where coefficient growth is exponential without complicated mitigations as in \cite{brown}. We demonstrate this now to give some clarity to the meaning behind lines 5 and 6 of Algorithm \ref{alg:agrawal}.

\begin{lemma}\label{lem:matrix}Let $f_1,f_2\in\bZ[x]$, not both constant. As an ideal in $\bZ[x]$, $(f_1,f_2)$ contains $\det C(f_1,f_2)$, which is nonzero if and only if $f_1$ and $f_2$ have no common root in the algebraic closure of $\bQ$.\end{lemma}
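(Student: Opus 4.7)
My plan is to view $C(f_1,f_2)$ as a Sylvester matrix and deduce both claims from its polynomial interpretation. Identify $\bZ^{2d}$ with $\{p \in \bZ[x] : \deg p < 2d\}$ by sending $(a_1,\ldots,a_{2d})^T$ to $\sum_{k=1}^{2d} a_k\, x^{2d-k}$. A direct check against Notation \ref{def:matrix} shows that for $1 \leq j \leq d$, column $j$ of $C(f_1,f_2)$ corresponds to $x^{d-j}f_1$, and column $d+j$ corresponds to $x^{d-j}f_2$.

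For the ideal containment I would apply Cramer's rule in the form $C \cdot \adj(C) = (\det C)\,I_{2d}$. The last column of this matrix identity produces integers $u_1,\ldots,u_d,v_1,\ldots,v_d$ (the entries of the last column of $\adj(C)$) for which $C\cdot(u_1,\ldots,u_d,v_1,\ldots,v_d)^T = (\det C)\,e_{2d}$. Translating back through the polynomial identification,
\[
\Bigl(\sum_{j=1}^{d} u_j\, x^{d-j}\Bigr)f_1 + \Bigl(\sum_{j=1}^{d} v_j\, x^{d-j}\Bigr)f_2 = \det C(f_1,f_2),
\]
exhibiting $\det C(f_1,f_2) \in (f_1,f_2)$ as an element of $\bZ[x]$.

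For the equivalence, the easy direction is immediate: if $\alpha \in \overline{\bQ}$ satisfies $f_1(\alpha)=f_2(\alpha)=0$, the row vector $(\alpha^{2d-1},\ldots,\alpha,1)$ pairs each column of $C$ to $\alpha^{d-j}f_i(\alpha)=0$, so $C$ has a nonzero left kernel and $\det C=0$. For the converse, assume $\det C=0$; the columns are then $\bQ$-linearly dependent, producing $u,v\in\bQ[x]$ of degree less than $d$, not both zero, with $u f_1+v f_2=0$. If $f_1$ and $f_2$ shared no root in $\overline{\bQ}$, they would be coprime in $\bQ[x]$, forcing $f_2\mid u$ and $f_1\mid v$ and hence $\deg u\geq\deg f_2$ and $\deg v\geq\deg f_1$. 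But the hypothesis ``not both constant'' ensures $\max(\deg f_1,\deg f_2)=d$, contradicting $\deg u,\deg v<d$.

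The main subtlety is the boundary behavior when one of $f_1,f_2$ is the zero polynomial or a nonzero constant. The ``not both constant'' assumption is exactly what keeps $d\geq 1$ and $\max(\deg f_1,\deg f_2)=d$ in force, so the divisibility and degree-counting argument closing the converse still goes through, including the degenerate subcase where $u$ or $v$ must itself vanish, which forces the corresponding $f_i$ to be zero and hence to share every root of its partner.
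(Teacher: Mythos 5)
Your proof is correct and follows essentially the same route as the paper: both identify $C(f_1,f_2)$ as a Sylvester-type matrix, use the adjugate (Cramer) identity to exhibit $\det C(f_1,f_2)=g_1f_1+g_2f_2$ with $\deg g_i<d$, and close the nonvanishing equivalence by a coprimality and degree-counting argument. The only cosmetic differences are that you phrase the forward direction via a left-kernel vector $(\alpha^{2d-1},\ldots,1)$ rather than substituting $\alpha$ into the Bezout identity, and you treat the degenerate cases explicitly whereas the paper just normalizes $\deg f_1=d$ without loss of generality.
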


\begin{proof}Let $d=\max(\deg f_1,\deg f_2)$. Consider the vector in $\bZ^{2d}$ whose only (perhaps) nonzero entry is $\det C(f_1,f_2)$ in the last coordinate. This is the image under $C(f_1,f_2)$ of some nonzero integer vector. We can split the entries of this vector down the middle to get coefficients for $g_1,g_2\in\bZ[x]$ that have degree at most $d-1$ and satisfy $\det C(f_1,f_2)=f_1g_1+f_2g_2\in (f_1,f_2)$. 

Plugging a common root of $f_1$ and $f_2$ into this last equation, should one exist, shows that $\det C(f_1,f_2)=0$. Conversely, suppose $f_1g_1+f_2g_2=0$ and that $\deg f_1 = d\geq 1$. Then $g_2$ must be nonzero to avoid the same being true of $g_1$, contradicting our choice of nonzero coefficient vector. But $g_2$ has degree at most $d-1$. So $f_1g_1=-f_2g_2$ implies that at least one of $f_1$'s $d$ roots must be shared by $f_2$.\end{proof}

\begin{notation}For a matrix $M$, let $M_{i,j}$ denote the entry in its $i^{\text{th}}$ row and $j^{\text{th}}$ column, and let $\mM_i$ denote its top, left $i\times i$ minor.\end{notation}

Line 1 of the next algorithm requires knowing the position of a nonzero entry in the input matrix, and line 8 requires knowing the maximum magnitude among entries. For notational convenience, we assume that $M_{n,1}$ is the nonzero maximum. 

\begin{algorithm}\caption{A reduction from a short vector problem with $n\geq 2$ to one call to $\textsc{sap}$ (gap-preserving) or $\textsc{gda}$ under a consistent $\ell_p$-norm with $p\in\{1,2,\infty\}$.}\label{alg:agrawal}
\begin{flushleft}
\parbox[t]{\linewidth}{\setlength\parfillskip{0pt}\hspace*{\algorithmicindent}\textbf{input:} $a\hspace{-0.02cm}\geq\hspace{-0.02cm} b\hspace{-0.02cm}\in\hspace{-0.02cm}\bN$ ($\alpha\hspace{-0.02cm}=\hspace{-0.02cm}a/b$), $M\hspace{-0.07cm}\in\hspace{-0.02cm}\M_n(\bZ)$ with $0\hspace{-0.02cm}\neq\hspace{-0.02cm}\det M$ and $M_{n,1}\hspace{-0.08cm}=\hspace{-0.03cm}\max_{i,j}\!|M_{i,j}|$}\\
\hspace*{\algorithmicindent}\textbf{output:} $\bq_0\in\bZ^n$ with $0 < \|M\bq_0\| \leq \alpha\min^{\times}_{\bq\in\bZ^n}\!\|M\bq\|$
\end{flushleft}
\begin{algorithmic}[1]
    \State\Longstate{$p\gets$ least prime not dividing $M_{n,1}\det M$}
    \State $M\gets x\,\adj\,M + p\,\text{Id}$\Longtop{$M=M(x)$ has linear polyno-}
    \For{$i\gets 2$ \textbf{to} $n$}\Bottom{mial entries}
    \State $M_{i,1}\gets M_{i,1}+p$
    \State\Shortstate{$M_{i,i\hspace{-0.02cm}-\hspace{-0.03cm}1}\gets M_{i,i\hspace{-0.02cm}-\hspace{-0.03cm}1}\hspace{-0.07cm}+\hspace{-0.02cm}p^j\!$ with $j\!>\!0$ minimal}\Longtop{need not compute determinant}
    \Statex\hspace{\algorithmicindent}so $\det C((\adj\,\mM_i)_{i,1},(\adj\,\mM_i)_{i,2})\neq 0$\Longbottom{to test each $j$---see Theorem \ref{thm:ag_speed}}
    \Statex \vspace{-\baselineskip}
    \EndFor
    \State $c\gets \det C((\adj\,M)_{n,1},(\adj\,M)_{n,2})$
    \State\Longstate{$c\gets c/p^j$ with $j$ maximal or $p+1$ if $|c|=p^j$}\Top{make $c$ coprime to $p$}
    \State\Longstate{$M\hspace{-0.03cm}\gets\hspace{-0.03cm} M(c^j)$ with $j\hspace{-0.03cm}=\hspace{-0.03cm}\big\lceil\log_{|c|}\!a^2(2M_{n,1}n)^{3n}\big\rceil$}\Longtop{substitute for $x$ so $M\in\text{M}_n(\bZ)$}
    \State\Longstate{$b_1,b_2\gets$ integers with $|b_1|$ minimal so $1=$}\Longtop{that these exist guarantees $M\bx$}
    \Statex $b_1(\adj\,M)_{n,1}+b_2(\adj\,M)_{n,2}$\Bottom{(line 10) and $M$ generate $\bZ^n$}
    \State $\bx\gets M^{-1}(b_1,b_2,0,...,0)$
    \State\Longstate{$q_0\hspace{-0.03cm}\gets\hspace{-0.03cm}\textsc{sap}(\alpha,\bx)$ or $\textsc{gda}(\alpha/n^{1/p},\bx,N)$ with}\Longtop{\textsc{gda} works since $x_n = 1/\det M$,}
    \Statex $N=n^{1/p}\det M/2\alpha$\Bottom{recall Proposition \ref{prop:equivalent}}
    \State \Return $M\{q_0\bx\}$
\end{algorithmic}
\end{algorithm}

Let us turn to the \textbf{for} loop, which builds the matrix Subsection \ref{ss:intuition} called $A$.

\begin{lemma}\label{lem:nonzero}For $i=2,...,n$, there is some $j\leq 2i-2$ satisfying the criterion of line 5 in the \emph{\textbf{for}} loop iteration corresponding to $i$.\end{lemma}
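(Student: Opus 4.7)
I would parameterize line 5's addition by an indeterminate $y$, so that substituting $y=p^j$ recovers the $\mM_i$ in question. Writing $\tilde{\mM}_i \in \M_i(\bZ[x,y])$ and $\tilde f_j := (\adj \tilde{\mM}_i)_{i,j}$ for $j \in \{1, 2\}$, cofactor expansion along the row containing $y$ yields $\tilde f_j(x, y) = f_j(x) + y\, g_j(x)$ with $\deg_x f_j \leq i-1$ and $\deg_x g_j \leq i-2$; identifying the $(i-2)\times(i-2)$ minor that arises as the coefficient of $y$ gives the clean formula $g_j = -(\adj \mM_{i-1})_{i-1, j}$ when $i \geq 3$, while a direct computation gives $g_1 = -1$, $g_2 = 0$ when $i = 2$. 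Either way, $g_1$ and $g_2$ are coprime in $\bQ[x]$: trivially when $i = 2$ (since $g_1$ is a unit), and by the inductive hypothesis when $i \geq 3$, because the success of line 5 at iteration $i-1$ together with Lemma \ref{lem:matrix} says exactly that $(\adj \mM_{i-1})_{i-1,1}$ and $(\adj \mM_{i-1})_{i-1,2}$ share no common root in $\bar{\bQ}$.

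Let $P(y) := \det C(\tilde f_1, \tilde f_2) \in \bZ[y]$; the criterion of line 5 is $P(p^j) \neq 0$. The plan is to prove $\deg_y P \leq 2i - 3$ and $P \not\equiv 0$, whence $P$ has fewer than $2i - 2$ roots and some $j \in \{1, \ldots, 2i-2\}$ must satisfy the criterion. For the degree bound, inspect the matrix of Notation \ref{def:matrix}: $C(\tilde f_1, \tilde f_2)$ is $(2i-2) \times (2i-2)$, and each column contains a single \emph{top} entry---the leading $x^{i-1}$-coefficient of $\tilde f_j$---which is $y$-free since $g_j$ has strictly smaller $x$-degree than $f_j$, while the remaining nonzero entries are affine in $y$. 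The first row of $C$ is supported only on the two top entries at columns $1$ and $i$, so every nonzero Leibniz term routes one of those columns to row $1$ through a top entry, contributing a $y$-free factor. Each such term therefore has $y$-degree at most $(2i-2) - 1 = 2i-3$.

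Showing $P \not\equiv 0$---equivalently, that $\tilde f_1$ and $\tilde f_2$ are coprime in $\bQ(y)[x]$---is the main obstacle. Suppose for contradiction that a common factor $h(x, y) \in \bQ[x, y]$ of positive $x$-degree exists. Let $h_j := f_j + (\mM_i)_{i, i-1}\, g_j \in \bQ[x]$, so that $\tilde f_j = h_j + \tilde M_{i, i-1}\, g_j$ with $h_j$ independent of $y$. At a point $(x_0, y_0)$ of the zero locus of $h$ with $g_1(x_0)g_2(x_0) \neq 0$, solving $\tilde f_j(x_0, y_0) = 0$ yields $y_0 + (\mM_i)_{i, i-1}(x_0) = -h_j(x_0)/g_j(x_0)$ for $j = 1, 2$; equating these on the (cofinite) generic fiber forces the polynomial identity $h_1 g_2 = h_2 g_1$ in $\bQ[x]$. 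The coprimality of $g_1$ and $g_2$ then yields $g_j \mid h_j$, so $h_j = g_j\, t$ for a common polynomial $t \in \bQ[x]$ with $\deg t \leq 1$. To derive the contradiction I would reduce modulo the prime $p$ of line 1: since $p$ is coprime to $M_{n,1}\, \det M$ and every perturbation introduced by lines 2, 4, and the previous iterations of line 5 is divisible by $p$, we have $\mM_i \equiv x \cdot (\adj M)^{[1:i, 1:i]} \pmod p$, which makes both $h_j$ and $g_j$ explicit integer multiples of specific minors of $\adj M$; the hypothetical relation $h_j = g_j\, t$ then translates to an algebraic identity among those minors that is incompatible with the coprimality of $p$ and $M_{n, 1}\, \det M$. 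Working this $p$-adic comparison out rigorously is the technical heart of the argument.

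Combining the two facts, $P \in \bZ[y]$ is a nonzero polynomial of degree at most $2i - 3$, so it has fewer than $2i - 2$ roots. Therefore some $j \leq 2i - 2$ satisfies $P(p^j) \neq 0$, which is precisely the condition of line 5.
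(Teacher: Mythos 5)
Your parameterization by $y$ and the Leibniz-term degree bound on $P(y)=\det C(\tilde f_1,\tilde f_2)$ are correct, and they constitute a genuinely different route from the paper. The paper does not analyze $\det C$ as a polynomial in $y$ at all; it bounds the number of bad $j$ by the $x$-degree of the single auxiliary polynomial $T:=f_2\,(\adj\,\mM_i)_{i,1}-f_1\,(\adj\,\mM_i)_{i,2}$, where $f_j:=(\adj\,\mM_{i-1})_{i-1,j}$ is your $g_j$ up to sign and $(\adj\,\mM_i)_{i,j}$ is taken at $y=0$. Since $f_2\tilde f_1-f_1\tilde f_2=T$ is independent of $y$, every common root of $\tilde f_1(\cdot,p^j)$ and $\tilde f_2(\cdot,p^j)$ is a root of $T$; and the second component of the paper's $2\times2$ matrix identity is linear in $p^j$ with the nonzero coefficient $\det C(f_1,f_2)$, so each such root pins down $p^j$ uniquely. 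Hence at most $\deg T\le 2i-3$ values of $j$ fail, which is the same count your degree bound gives.

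The gap is exactly where you flagged it, and it is not a formality. Your derivation of ``$h_1g_2=h_2g_1$'' covers a common factor $h$ only when $\deg_y h\ge 1$ and when the zero locus of $h$ projects to infinitely many $x$-values avoiding the roots of $g_1g_2$; the case $h\in\bQ[x]$ needs a separate (easy) argument. More importantly, the surviving case $h_1g_2=h_2g_1$---equivalently $T=0$---is precisely what must be ruled out, and your $p$-adic sketch does not do so. The paper disposes of it by a concrete computation rather than a structural one: reading off the constant-term matrix displayed in \eqref{eq:4} (lower triangular with $p$ on the diagonal, $p$ down the first column, and $p^{j_k}$ on the subdiagonal), cofactor expansion gives the constant terms of $(\adj\,\mM_i)_{i,1}$, $(\adj\,\mM_i)_{i,2}$, and $f_2$ as $p^{i-1}$, $0$, and $p^{1+j_3+\cdots+j_{i-1}}$ respectively, so $T$ has constant term $p^{\,i+j_3+\cdots+j_{i-1}}\neq 0$. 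Once $T\neq 0$ is known, the coprimality of $f_1,f_2$ (your inductive hypothesis on $g_1,g_2$) already delivers $P\not\equiv 0$: a common root $x_0\in\overline{\bQ(y)}$ of $\tilde f_1,\tilde f_2$ satisfies $T(x_0)=0$, hence $x_0\in\bar\bQ$, and then $(\adj\,\mM_i)_{i,j}(x_0)-yf_j(x_0)=0$ with $y$ transcendental forces $f_1(x_0)=f_2(x_0)=0$, a contradiction. This one constant-term identity is the ingredient your proof is missing, and it is considerably lighter than carrying out the proposed reduction modulo $p$ of the whole matrix.
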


\begin{proof}When $i=2$ we are asked to find $j$ for which the linear polynomials $M_{1,1}$ and $M_{2,1} + p^{j}$ do not share a root (by Lemma \ref{lem:matrix}). The constant term of $M_{1,1}$ is $p$ by line 2, meaning it has at most one root. So asking that $j\leq 2i-2 = 2$ gives enough space to avoid the at-most-one value of $j$ that fails. Now suppose $i\geq 3$ and that the claim holds for $i-1$. Let $M$ be its value after line 4 in the \textbf{for} loop iteration corresponding to $i$, and let $f_1=(\adj\,\mM_{i-1})_{i-1,1}$ and $f_2=(\adj\,\mM_{i-1})_{i-1,2}$.

By assumption there are $g_1,g_2\in\bZ[x]$ with $g_1f_1 + g_2f_2 = \det C(f_1,f_2)\neq 0$. Fix an integer $j$, and let $h_1 = (\adj\,\mM_i)_{i,1} - p^jf_1$ and $h_2 = (\adj\,\mM_i)_{i,2} - p^jf_2$, the polynomials we hope to make coprime with the appropriate choice of $j$. We have $$\begin{bmatrix}f_2 & -f_1\\ g_1 & g_2\end{bmatrix}\begin{bmatrix}h_1\\ h_2\end{bmatrix}=\begin{bmatrix}f_2(\adj\,\mM_i)_{i,1}-f_1(\adj\,\mM_i)_{i,2}\\g_1(\adj\,\mM_i)_{i,1}+g_2(\adj\,\mM_i)_{i,2} - p^j\det C(f_1,f_2)\end{bmatrix}.$$ In the column on the right, where we now focus our attention, $p^j$ has been isolated.

For each root of the top polynomial, there is at most one value of $j$ that makes it a root of the bottom. Thus it suffices to show that $f_2(\adj\,\mM_i)_{i,1}-f_1(\adj\,\mM_i)_{i,2}$ is not the zero polynomial. Then its degree, which is at most $2i-3$, bounds how many values of $j$ can make the right-side polynomials share a root. As this occurs whenever $h_1$ and $h_2$ share a root, Lemma \ref{lem:matrix} would complete the proof.

To show that $f_2(\adj\,\mM_i)_{i,1}-f_1(\adj\,\mM_i)_{i,2}$ is nonzero, we compute its constant term from the following matrix: \begin{equation}\label{eq:4}\renewcommand\arraystretch{0.9}\begin{bmatrix}p & 0 & \cdots & 0 & 0 & \,0 \vspace{0.1cm}\\ p+p^{j_2}\!\! & p & & & 0 & \,0 \vspace{0.1cm}\\ p & p^{j_3}\! & & & & \,0 \vspace{-0.12cm}\\ \vdots & & \ddots & & & \,\vdots \vspace{-0.00cm}\\ p & 0 & & \!\!p^{j_{i-1}}\!\!\! & p & \,0 \vspace{0.1cm}\\ p & 0 & \cdots & 0 & p^{j}\! & \,p\end{bmatrix}.\end{equation} These are the constants in $\mM_i$ after adding $p^j$ in the $i,i-1$ position---the main diagonal comes from line 2, the first column comes from line 4, and the second diagonal comes from line 5. To compute $h_1$ or $h_2$, we use cofactor expansion along the bottom row after deleting the last column and the first or second row. The $(i-2)\times(i-2)$ minor determinants that are multiplied by the bottom row constant $p^j$ are exactly $f_1$ and $f_2$ up to a sign. What remains sums to $(\adj\,\mM_i)_{i,1}$ or $(\adj\,\mM_i)_{i,2}$. So the constant terms of $(\adj\,\mM_i)_{i,1}$, $(\adj\,\mM_i)_{i,2}$, and $f_2$ are $p^{i-1}$, $0$, and $p$ to the power $1+j_3+\cdots j_{i-1}$, respectively. This makes $p$ to the power $i+j_3+\cdots +j_{i-1}$ the constant term of $f_2(\adj\,\mM_i)_{i,1}-f_1(\adj\,\mM_i)_{i,2}$.\end{proof}

We remark that by using a large integer instead of $x$ in line 2, the \textbf{for} loop could successively make pairs of integers coprime rather than polynomials. Then the Euclidean algorithm could test $j$ in line 5; determinants involving polynomial entries need not be computed. We might expect such an algorithm to require $O(n^3\log M_{n,1}n)$ operations (this uses that the average ratio with Euler's phi function, $\varphi(n)/n$, is a positive constant), but the provable worst case is bad. The best current asymptotic upper bound on the size of the interval that must be sieved or otherwise searched to find $j$ is due to Iwaniec \cite{iwaniec}. It only limits the algorithm to $O(n^7\log M_{n,1}n)$ operations. We favored the polynomial approach because of an easier bound on $j$ (Lemma \ref{lem:nonzero}) and a better provable worst case (Theorem \ref{thm:ag_speed}).   

The next lemma allows the vector in line 10 to pass as $\bb$ from Subsection \ref{ss:intuition}.

\begin{lemma}\label{lem:coprime}With $M$ denoting its value in line 9, $\emph{gcd}((\emph{adj}\,M)_{n,1},(\emph{adj}\,M)_{n,2})=1$.\end{lemma}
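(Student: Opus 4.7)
The plan is to show that no prime can divide $\gcd((\adj M)_{n,1}, (\adj M)_{n,2})$ for $M$ as defined in line 9. Let $f_1, f_2 \in \bZ[x]$ be the $(n,1)$- and $(n,2)$-entries of $\adj M$ viewed as polynomials in $x$ right after the \textbf{for} loop, and let $c_0$ denote the integer computed in line 7. By Lemma \ref{lem:matrix}, $c_0 \in (f_1, f_2) \subseteq \bZ[x]$, so evaluating a B\'ezout relation at $x = \tilde c^j$---with $\tilde c$ the value of $c$ emerging from line 8 and $j$ the exponent used in line 9---shows that $\gcd(f_1(\tilde c^j), f_2(\tilde c^j))$ divides $c_0$. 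Line 8 makes $\gcd(\tilde c, p) = 1$, while $c_0$ is either $\pm \tilde c\, p^k$ or (when $\tilde c = p+1$) $\pm p^k$. So every prime factor of $c_0$ is either $p$ or a prime dividing $\tilde c$, and I would eliminate both possibilities.

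Suppose first $p \mid \gcd(f_1(\tilde c^j), f_2(\tilde c^j))$. The modifications in lines 2, 4, and 5 add only integer multiples of $p$ to the ``$x\,\adj M_{\text{in}}$'' term (from line 2, with $M_{\text{in}}$ the input matrix). Therefore $M(x) \equiv x\,\adj M_{\text{in}} \pmod p$ entry-wise, and applying the adjugate gives $\adj M(x) \equiv x^{n-1}\adj(\adj M_{\text{in}}) = x^{n-1}(\det M_{\text{in}})^{n-2} M_{\text{in}} \pmod p$ (via $\adj\adj A = (\det A)^{n-2} A$, valid for $n \geq 2$). Reading off the $(n,1)$-entry at $x = \tilde c^j$,
\[
f_1(\tilde c^j) \equiv \tilde c^{\,j(n-1)}(\det M_{\text{in}})^{n-2}(M_{\text{in}})_{n,1} \pmod p.
\]
By the choice of $p$ in line 1, neither $\det M_{\text{in}}$ nor $(M_{\text{in}})_{n,1}$ is divisible by $p$, and $\gcd(\tilde c^j, p) = 1$. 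So $f_1(\tilde c^j) \not\equiv 0 \pmod p$, contradicting the hypothesis.

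Next suppose a prime $q$ with $q \mid \tilde c$ divides $\gcd(f_1(\tilde c^j), f_2(\tilde c^j))$. Then $q \mid \tilde c^j$, so $f_2(\tilde c^j) \equiv f_2(0) \pmod q$, forcing $q \mid f_2(0)$. I would compute $f_2(0) = (\adj M(0))_{n,2} = (-1)^n \det N$, where $N$ is obtained from $M(0)$ by deleting row $2$ and column $n$. The matrix $M(0)$ is exactly the one in \eqref{eq:4}, whose top row $(p, 0, \ldots, 0)$ survives in $N$; cofactor expansion of $\det N$ along that row reduces the computation to $p$ times the determinant of an $(n-2) \times (n-2)$ submatrix that turns out to be upper triangular with diagonal $p^{j_3}, p^{j_4}, \ldots, p^{j_n}$. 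Hence $f_2(0) = \pm p^{\,1 + j_3 + \cdots + j_n}$, a pure power of $p$ up to sign. Since $\gcd(\tilde c, p) = 1$ and $q \mid \tilde c$, $q$ cannot divide a power of $p$---contradiction.

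The main obstacle is the second case: one needs the structural fact that $f_2(0)$ has no prime divisor other than $p$, so that the divisibility ``$\gcd(f_1(\tilde c^j), f_2(\tilde c^j)) \mid c_0$'' can be sharpened to coprimality with $\tilde c$. Without it, $\gcd(f_1(0), f_2(0))$ could in principle share a prime with $\tilde c$ and the argument would not close. Recognizing the upper-triangular form that emerges after the specific row-and-column removal in \eqref{eq:4} is the key calculation.
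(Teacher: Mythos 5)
Your proof is correct and follows essentially the same strategy as the paper's: invoke Lemma \ref{lem:matrix} to see that the gcd divides the line-6 determinant, factor that determinant into its $p$-part and its coprime-to-$p$ part $\tilde c$, rule out $p$ via the leading coefficient of $(\adj M)_{n,1}$ modulo $p$ (your $\adj\adj M_{\text{in}} = (\det M_{\text{in}})^{n-2}M_{\text{in}}$ identity is exactly what the paper uses when it names the leading term $M_{n,1}\det M^{n-2}x^{n-1}$), and rule out divisors of $\tilde c$ via the constant term of $(\adj M)_{n,2}$ being a pure power of $p$, just as the paper reads off from (\ref{eq:4}). The only blemish is a consistent off-by-one in your references to lines 7--9 of Algorithm \ref{alg:agrawal} ($c_0$ is the line-6 value, $\tilde c$ the line-7 value, and $j$ the exponent from line 8), which does not affect the mathematics.
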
 

\begin{proof}By Lemma \ref{lem:matrix}, it suffices to prove $\text{gcd}((\adj\,M)_{n,1},(\adj\,M)_{n,2},c)=1$ with $c$ as in line 6. Now let $c'$ be $c/p^j$ or $p+1$ as in line 7. Recall the constant terms displayed in (\ref{eq:4}), which show that $(\adj\,M)_{n,2}$ is a power of $p$ modulo $c'$. This implies $\text{gcd}((\adj\,M)_{n,1},(\adj\,M)_{n,2},c)$ is a power of $p$ since $p\nmid c'$. But the constants added throughout the \textbf{for} loop are multiples of $p$. So before substituting for $x$, only the leading coefficient of $(\adj\,M)_{n,1}$ might have been nonzero modulo $p$. With $M$ now the original input matrix, the leading term is $M_{n,1}\!\det M^{n-2}x^{n-1}$. By line 1 this is coprime to $p$ whenever the same is true of the integer substituted for $x$.\end{proof}

\begin{lemma}\label{lem:operator}Let $M$ be the input matrix, let $c^j$ be as in line 8, and let $A$ be such that $c^j\emph{adj}\,M + A$ is Algorithm \ref{alg:agrawal}'s value of $M$ in line 9. Then $\|MA\|_{\emph{op}} < (2nM_{n,1})^{3n}/5n$ under any $\ell_p$-norm.\end{lemma}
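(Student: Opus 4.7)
The plan is to bound $\|MA\|_{\text{op}}$ by submultiplicativity, using the elementary estimate $\|M\|_{\text{op}} \leq nM_{n,1}$ for the input matrix together with a direct combinatorial bound on $\|A\|_{\text{op}}$.

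First I would describe $A$ by tracking every constant summand appended to $M$ in Algorithm \ref{alg:agrawal}. Line 2 contributes $p\,\text{Id}$; line 4 contributes an extra $p$ in each position $(i,1)$ with $i \geq 2$; and line 5 contributes $p^{j_i}$ in the subdiagonal position $(i,i-1)$ for $i = 2,\ldots,n$, where Lemma \ref{lem:nonzero} bounds $j_i \leq 2i-2 \leq 2n-2$. All other entries of $A$ vanish, so every entry has magnitude at most $p + p^{2n-2} \leq 2p^{2n-2}$. Each row of $|A|$ carries at most three nonzero entries and each column at most $n$, all but one of which equal $p$. Reading off the maximum row and column sums gives $\|A\|_{1} \leq (n+1)p^{2n-2}$ and $\|A\|_{\infty} \leq 3p^{2n-2}$; the $\ell_2$-operator norm obeys the same order via the Frobenius norm. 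Combined with $\|M\|_{\text{op}} \leq nM_{n,1}$, this yields $\|MA\|_{\text{op}} = O(n^2 M_{n,1}\,p^{2n-2})$ uniformly for all three operator norms.

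The next task is to bound the prime $p$, defined as the smallest prime not dividing $N := M_{n,1}|\det M|$. Hadamard's inequality gives $|\det M| \leq n^{n/2} M_{n,1}^n$, hence $N \leq (nM_{n,1})^{n+1}$. Since the primorial $\prod_{q \leq x,\, q \text{ prime}} q$ grows as $e^{\theta(x)}$ and Chebyshev-type lower bounds force $\theta(x) \gtrsim x$ past a fixed threshold, this product outstrips $N$ once $x$ exceeds a constant multiple of $\ln N$; therefore some prime below $x$ must fail to divide $N$, giving $p = O(\ln N) = O(n \log(nM_{n,1}))$.

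What remains---and where I expect the main, though routine, obstacle---is verifying the inequality $O(n^2 M_{n,1}\,p^{2n-2}) < (2nM_{n,1})^{3n}/5n$ with explicit constants. Asymptotically there is ample room: after extracting $(2n-2)$-th roots the required inequality reduces to bounding $p$ by a positive power of $nM_{n,1}$, and the polylogarithmic estimate for $p$ easily wins for large $n$ or large $M_{n,1}$. The boundary regime where $n$ and $M_{n,1}$ are both small, in which the polylogarithmic bound on $p$ offers little slack, is handled by a direct numerical check exploiting that $(2nM_{n,1})^{3n}$ is already enormous in that range.
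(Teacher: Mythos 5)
Your overall plan—bound the entries that compose $A$, bound $p$ via a primorial argument, and finally check the stated inequality—mirrors the paper's, but there are two genuine gaps.

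First, the lemma asserts the bound ``under any $\ell_p$-norm,'' yet you only estimate $\|A\|_{\text{op}}$ for $p\in\{1,2,\infty\}$ (maximum column sum, Frobenius, maximum row sum). The paper sidesteps the norm dependence entirely by bounding the entries of $MA$ directly and using $\|\bu\|=1\Rightarrow\|\bu\|_\infty\leq 1$, which yields $\|MA\bu\|\leq n\|MA\bu\|_\infty\leq n^2\max_{i,j}|(MA)_{i,j}|$ for \emph{every} $\ell_p$. Your submultiplicativity route would need an interpolation argument (or a uniform entry-wise bound like the paper's) to cover intermediate $p$.

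Second, and more seriously, your final step is declared rather than proved. The lemma states an explicit inequality, $\|MA\|_{\text{op}}<(2nM_{n,1})^{3n}/5n$, but you only establish $\|MA\|_{\text{op}}=O(n^2 M_{n,1}p^{2n-2})$ with $p=O(n\log(nM_{n,1}))$ and then assert that the comparison ``easily wins'' asymptotically, with ``a direct numerical check'' for small parameters. No constants are pinned down, so nothing is actually verified. The paper does this work: it invokes the explicit Rosser--Schoenfeld bound $\vartheta(x)>0.231x$ for $x\geq 2$, checks that $x=(\log M_{n,1}^{3n}n^n)/0.462\geq 2$ even at $n=2$, $M_{n,1}=1$, obtains $p<x$ concretely, and then reduces the target inequality by taking $(n-1)$-th roots and using $(5/4)^{1/(n-1)}\leq 5/4$ to the elementary fact $(\log M_{n,1}^3 n)^2<1.366\,M_{n,1}^3 n$. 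Without producing an explicit constant in the bound for $p$ and then carrying that through the reduction, your argument does not establish the stated inequality.

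One further small slip: you count ``at most three nonzero entries per row and each column at most $n$, all but one of which equal $p$,'' but you should note that when $i=2$ the positions $(i,1)$ and $(i,i-1)$ coincide, giving entry $p+p^{j_2}$ there, consistent with the paper's display~(\ref{eq:4}); your row/column estimates still go through, but the bookkeeping should be made explicit.
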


\begin{proof}The operator norm is $\max_{\|\bu\|=1}\!\|MA\bu\|$. Using $\|\bu\|_{\infty}\leq 1$ gives \begin{equation}\label{eq:5}\|MA\bu\|\leq n\|MA\bu\|_{\infty}\leq n^2\max_{i,j\in[n]}|(MA)_{i,j}|.\end{equation} 

{\noindent We refer back to (\ref{eq:4}), which displays the entries of $A$ when $i = n$. Lemma \ref{lem:nonzero} says\unskip\parfillskip 0pt \par}

\noindent$j_i\leq 2i-2$, so the entries of $MA$ are bounded in magnitude by \begin{equation}\label{eq:6}\max_{i,j\in[n]}|M_{i,j}|\max(np+p^2,p+p^{2n-2})\leq 2M_{n,1}p^{2n-2}\leq 2M_{n,1}^3p^{2n-2}.\end{equation} (Recall that $n\geq 2$ for this inequality.) Here $np+p^2$ comes from the first column of $A$, and $p+p^{2n-2}$ comes from the $(n-1)^{\text{th}}$ column. 

Now we turn to the size of $p$. If $x\in\bR$ is such that $x\#$, the product of primes not exceeding $x$, is larger than $M_{n,1}|\det M|$, then it must be that $p< x$. Rosser and Schoenfeld's lower bound on Chebyshev's theta function, $\vartheta(x)=\sum_{p\leq x}\!\log p$, gives $\vartheta(x) > 0.231x$ when $x\geq 2$ \cite{rosser}. For the determinant we use Hadamard's bound: $|\det M|\leq (M_{n,1}\sqrt{n})^n$ \cite{hadamard}. So take $x=(\log M_{n,1}^{3n}n^n)/0.462$ (note that $x\geq 2$ even when $n=2$ and $M_{n,1}=1$, allowing for the Rosser-Schoenfeld bound) to get $$\log x\# =\vartheta(x) > 0.231x=\tfrac{1}{2}\log M_{n,1}^{3n}n^{n}\geq \log M_{n,1}^{n+1}n^{n/2} \geq \log M_{n,1}|\det M|.$$ 

Combining $p<x$ with (\ref{eq:5}) and (\ref{eq:6}) gives $\|MA\|_{\text{op}} < 2M_{n,1}^3n^2x^{2n-2}$. We must show that this is less than the stated bound of $(2nM_{n,1})^{3n}/5n$. To do this, raise both expressions to the power $1/(n-1)$ and use $(5/4)^{1/(n-1)}\leq 5/4$. This simplifies the desired inequality to $(\log M_{n,1}^3n)^2<1.366M_{n,1}^3n$, which is true.\end{proof}

\begin{theorem}\label{thm:ag_works}Under the $\ell_1$, $\ell_2$, or $\ell_{\infty}$-norm, the output of Algorithm \ref{alg:agrawal} solves the initial short vector problem.\end{theorem}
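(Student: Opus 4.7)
The plan is to verify two properties of the returned vector $\bq_0 := M' \{q_0\bx\}$, where $M$ denotes the input matrix and $M'$ the matrix after line 9: (i) $\bq_0 \in \bZ^n \setminus \{0\}$ and (ii) $\|M\bq_0\| \leq \alpha \min^{\times}_{\bq\in\bZ^n}\!\|M\bq\|$. For (i), I will rewrite $\bq_0 = q_0\bb - M'\,\text{round}(q_0\bx)$, which is visibly integer-valued and nonzero because SAP returns $q_0$ with $\{q_0\bx\} \neq 0$ and $M'$ is invertible. To interpret $M\bq_0$ as an element of the SVP lattice, I will show that $\bb$ and the columns of $M'$ generate $\bZ^n$: Lemma \ref{lem:coprime} yields $b_1,b_2 \in \bZ$ with $b_1(\adj M')_{n,1} + b_2(\adj M')_{n,2} = 1$, so the $n^{\text{th}}$ coordinate of $(\adj M')\bb = (\det M')\bx$ is $1$, forcing $\bx_n = 1/\det M'$; by Cramer's rule, the $n \times n$ minor of $[\bb \mid M']$ obtained by swapping $\bb$ into column $n$ equals $\bx_n \det M' = 1$, so the minors have $\gcd$ $1$ and $\bZ\bb + M'\bZ^n = \bZ^n$.

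For (ii), I will write $MM' = K\,\text{Id} + MA$ with $K = c^j\det M$ and $A$ as in Lemma \ref{lem:operator}, which bounds $E := \|MA\|_{\text{op}} < (2nM_{n,1})^{3n}/(5n)$. Line 8 then ensures $|K| \geq a^2(2M_{n,1}n)^{3n}$, giving $E/|K| \leq 1/(5na^2)$. Any $\bq\in\bZ^n$ decomposes by (i) as $\bq = q\bb + M'\mathbf{v}$, so $M\bq = (K\,\text{Id} + MA)(q\bx + \mathbf{v})$ has norm in $[(|K|-E)\|q\bx+\mathbf{v}\|,\,(|K|+E)\|q\bx+\mathbf{v}\|]$. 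Minimizing over $\mathbf{v}$ for fixed $q$ produces $\|\{q\bx\}\|$ (and $\geq 1$ for $q = 0$), so $\rho := \min^{\times}\!\|M\bq\| \geq (|K|-E)\min(1,\lambda)$ with $\lambda := \min^{\times}\!\|\{q\bx\}\|$. The SAP guarantee then yields $\|M\bq_0\| \leq (|K|+E)\|\{q_0\bx\}\| \leq (|K|+E)\alpha\lambda$.

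The main obstacle is closing the gap between the resulting raw ratio $\alpha(|K|+E)/(|K|-E)$ and the exact $\alpha$ that SVP demands. I plan to exploit integrality: under $\ell_p$ with $p \in \{1,\infty\}$ both $\|M\bq_0\|$ and $\rho$ are positive integers, and for $p = 2$ their squares are; hence $b\|M\bq_0\| \leq a\rho$ (equivalently $\|M\bq_0\| \leq \alpha\rho$) follows from a strictly weaker real-valued bound once the slack is less than $1$. The $a^2$ in line 8's exponent was calibrated so that $E/|K| \leq 1/(5na^2)$ makes this slack shrink quadratically in $\alpha$; a case split on whether $\rho$ is realized by some $q \neq 0$ combination (where SAP competes directly with the SVP minimizer) or by a $q = 0$ combination of columns of $M'$ (which is dominated by a $q \neq 0$ candidate because the very large $\det M'$ produced in line 8 forces $\lambda$ to be small via a Dirichlet-type bound on $\bx = (M')^{-1}\bb$) should close the argument.
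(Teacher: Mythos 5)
Your proof shares the paper's skeleton: establish via Lemma \ref{lem:coprime} and Cramer's rule that $\bb$ together with the columns of $M'$ generate $\bZ^n$ (so $\bx$ has last coordinate $1/\det M'$ and returns are genuine lattice vectors), write $MM' = c^j\!\det M\,\text{Id}+MA$, invoke Lemma \ref{lem:operator} for the operator norm of $MA$, and close the gap using the fact that $\|M\bq\|^2 \in \bZ$ for $p\in\{1,2,\infty\}$. The place where you and the paper diverge, and where your proposal is incomplete, is the final quantitative step.

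The paper invokes \emph{Minkowski's theorem} to bound $\rho := \min^{\times}\|M\bq\| \leq n^{1/p}|\det M|^{1/n}$, which simultaneously (a) gives an explicit lower bound on the admissible inflation $\sqrt{a^2\rho^2+1}/(a\rho)$, and (b) implicitly rules out your ``$q=0$'' case (if the SVP minimizer were $M'\mathbf{v}$ with $\mathbf{v}\in\bZ^n\setminus\{0\}$, then $\rho \geq |K|-E \gg n^{1/p}|\det M|^{1/n}$, contradiction). Your proposal never mentions Minkowski; instead you suggest a Dirichlet bound on $\lambda = \min^\times\|\{q\bx\}\|$, which is a viable alternative route but requires a lower bound on $\det M' = \det(c^j\adj M + A)$, i.e., controlling cancellation against the leading term $c^{jn}(\det M)^{n-1}$. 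That estimate is not obvious and is left unaddressed.

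The more concrete numerical gap: you write ``Line 8 then ensures $|K|\geq a^2(2M_{n,1}n)^{3n}$, giving $E/|K|\leq 1/(5na^2)$.'' But $|K|=|c^j\!\det M|$, so line~8 actually gives $|K|\geq a^2(2M_{n,1}n)^{3n}|\det M|$ and hence $E/|K| < 1/(5na^2|\det M|)$. That extra $|\det M|$ in the denominator is not cosmetic: it is exactly what cancels the $|\det M|^{1/n}$ coming from Minkowski's bound on $\rho$ (equivalently, the $|\det M|^{2/n-1}\leq 1$ step the paper takes when solving inequality~\eqref{eq:10} for $|c^j|$). With your weakened bound, the ``slack $<1$'' computation produces something like $\rho\cdot 2E/(|K|-E) \lesssim n^{1/p}|\det M|^{1/n}/(na)$, which is \emph{not} less than $1$ when $|\det M|$ is large. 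Restoring the $|\det M|$ factor and invoking Minkowski (or, if you want to keep Dirichlet, rigorously lower-bounding $\det M'$) is required to close the argument, which the proposal acknowledges as unfinished. On the positive side, you make explicit the $q=0$ vs. $q\neq 0$ case split that the paper leaves implicit; that is a genuine clarification.
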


\begin{proof} There are two parts to the proof: 1) showing that the algorithm replaces the columns of $M$ with $n+1$ vectors that define the same lattice, $n$ of them being nearly scaled orthonormal, and 2) showing that nearly scaled orthonormal is as good as being scaled orthonormal. Throughout the proof, let $M$ be the input matrix, let $c^j$ be as in line 8, let $M'$ be Algorithm \ref{alg:agrawal}'s value of $M$ in line 9, and let $A=M'-c^j\hspace{0.03cm}\adj\,M$ be the matrix of constants added throughout the \textbf{for} loop (as used in Lemma \ref{lem:operator} and as shown in (\ref{eq:4}) when $i=n$). 

For part 1), with $\bb = (b_1,b_2,0,...,0)$ from line 10, Lemma \ref{lem:coprime} gives \begin{equation}\label{eq:12}\bx=M'^{-1}\bb=\frac{(x_1,x_2,...,1)}{\det M'}.\end{equation} By Cramer's rule \cite{cramer}, the $1$ in the last coordinate is the determinant after replacing the last column of $M'$ by $\bb$, so that these $n$ columns generate $\bZ^n$. This in turn shows that the columns of $MM'$ and $M\bb$ generate the input lattice. Also note by Proposition \ref{prop:equivalent}, that a coordinate of $\det M'\,\bx$ being $1$ allows for \textsc{gda} in place of \textsc{sap} with $N$ set to $n^{1/p}\det M'/2\alpha$ and $\alpha$ scaled by $1/n^{1/p}$. 

Instead of finding a short integer combination of $M\bb$ and the columns of \begin{equation}\label{eq:7}MM'=c^j\!\det M\,\text{Id} + MA,\end{equation} Algorithm \ref{alg:agrawal} uses $(MM')^{-1}(M\bb)= \bx$ and the columns of $(MM')^{-1}(MM')=\text{Id}$. Then $MM'\{q_0\bx\}$ is proposed as a short vector. It is indeed an element of the original lattice as the coordinates of $M'\{q_0\bx\}\equiv q_0\bb\text{ mod }\bZ^n$ are all integers. But it must be checked is that $MM'\{q_0\bx\}$ is short whenever $\{q_0\bx\}$ is. Part 2) of the proof is to make precise the insignificance of the second matrix summand, $MA$, in (\ref{eq:7}). We begin by computing how much multiplication by the full matrix in (\ref{eq:7}) is allowed to inflate the gap without invalidating the output of \textsc{gda} or \textsc{sap}.

By Minkowski's theorem \cite{minkowski}, the magnitude of the shortest vector in the original lattice with respect to the $\ell_{\infty}$-norm is not more than $|\!\det M|^{1/n}$. So under an $\ell_p$-norm with $p\in\bN$, the shortest vector has some magnitude, say $\lambda$, with $(n^{1/p}|\det M|^{1/n})^p\geq \lambda^p\in\bZ$. In particular, $n|\!\det M|^{2/n}\geq \lambda^2\in\bZ$ when $p\in\{1,2,\infty\}$. Now, if $\bq\in \bZ^n$ is such that $\|M\bq\|^2 < (a^2\lambda^2+1)/b^2$, then it must be that $\|M\bq\| \leq a\lambda/b$ since there are no integers strictly between $(a\lambda/b)^2$ and $(a^2\lambda^2+1)/b^2$. Thus multiplication by $MM'$ may harmlessly inflate the gap between the norms of our output vector and shortest vector by anything less than \begin{equation}\label{eq:8}\frac{\sqrt{a^2\lambda^2+1}}{b\alpha\lambda} = \frac{\sqrt{a^2\lambda^2+1}}{a\lambda} \geq \frac{\sqrt{a^2n|\det M|^{2/n}+1}}{a\sqrt{n}|\det M|^{1/n}}.\end{equation}

Scaling does not affect the ratio of vector norms, so to determine the effect of multiplication by (\ref{eq:7}) it suffices to consider the matrix \begin{equation}\label{eq:9}\text{Id} + MA/c^j\!\det M\end{equation} instead. If $\bq_{\min}$ is a shortest nonzero vector in the simultaneous approximation lattice generated by $\bZ^n$ and $\bx$, then a shortest vector after applying (\ref{eq:9}) to this lattice has norm at least $(1 - \|MA\|_{\text{op}}/|c^j\!\det M|)\|\bq_{\min}\|$. Similarly, the vector $\{q_0\bx\}$ obtained using $q_0$ from line 11 increases in norm by at most a factor of $(1 + \|MA\|_{\text{op}}/|c^j\!\det M|)$. Combining this with our conclusion regarding (\ref{eq:8}) shows that it suffices to verify the following inequality holds: \begin{equation}\label{eq:10}\frac{1+\|MA\|_{\text{op}}/|c^j\!\det M|}{1 - \|MA\|_{\text{op}}/|c^j\!\det M|}\leq\frac{\sqrt{a^2n|\!\det M|^{2/n}+1}}{a\sqrt{n}|\!\det M|^{1/n}}.\end{equation}

Now solve for $|c^j|$ to get a lower bound of $$\frac{\sqrt{a^2n|\!\det M|^{2/n}+1}+a\sqrt{n}|\!\det M|^{1/n}}{\sqrt{a^2n|\!\det M|^{2/n}+1}-a\sqrt{n}|\!\det M|^{1/n}}\cdot \frac{\|MA\|_{\text{op}}}{|\!\det M|}< \frac{(5a^2n|\!\det M|^{2/n})\|MA\|_{\text{op}}}{|\!\det M|}.$$

Ignoring the powers of $|\det M|$ on the right-hand side since $2/n\leq 1$, we see that $j$ in line 8 is chosen to make the bound above agree exactly with Lemma \ref{lem:operator}.\end{proof}

\begin{theorem}\label{thm:ag_speed}Let $m=\max(a^{1/n^3}\!,M_{n,1})$. The reduction in Algorithm \ref{alg:agrawal} requires $O(n^4\log mn)$ operations on integers of length $O(n^4\log mn)$.\end{theorem}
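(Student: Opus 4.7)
The plan is a line-by-line accounting of Algorithm \ref{alg:agrawal}, with all size bounds flowing from quantities already controlled in earlier results. The key parameters I would collect first: from the proof of Lemma \ref{lem:operator}, the prime $p$ found in line 1 satisfies $p=O(n\log M_{n,1}n)$; Hadamard's inequality gives adjugate entries of length $O(n\log M_{n,1}n)$; by Lemma \ref{lem:nonzero}, each exponent $j_i$ added in line 5 is at most $2i-2$; the integer $c^j$ substituted in line 8 has magnitude at most $a^2(2M_{n,1}n)^{3n}$, and using $\log a\leq n^3\log m$ (which follows from $m\geq a^{1/n^3}$), this length is $O(n^3\log m+n\log mn)$; the entries of the resulting integer matrix $M'$ in line 9 are of comparable length; and one last application of Hadamard yields $\log|\det M'|=O(n^4\log mn)$, which drives the overall integer-length bound because $\det M'$ is the denominator of the vector $\bx$ passed to the oracle in line 11.

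I would then split the operation count into three blocks. The computations outside the \textbf{for} loop---computing $\adj M$ in line 2, the resultant-style determinant in line 6, the integer substitution in line 8, the extended Euclidean computation in line 9, the linear solve $M^{-1}\bb$ in line 10, and the final matrix-vector product in line 12---each reduce to $O(n^3)$ or $O(n^4)$ ring operations via standard fraction-free (Bareiss-style) linear algebra, on integers whose lengths never exceed $O(n^4\log mn)$. Each of these fits comfortably inside the claimed bound.

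The delicate block is the \textbf{for} loop. A naive reading of line 5 would demand a fresh $2(i-1)\times 2(i-1)$ polynomial determinant $\det C(h_1,h_2)$ for every candidate $j$, which would blow the budget. The fix, hinted at in the algorithm's annotation and implicit in the proof of Lemma \ref{lem:nonzero}, is that $\det C(h_1,h_2)$ is itself a polynomial in $p^j$ of degree at most $2i-3$, since the matrix identity $\begin{bmatrix}f_2 & -f_1\\ g_1 & g_2\end{bmatrix}\begin{bmatrix}h_1\\ h_2\end{bmatrix}$ from that proof isolates $p^j$ in a single entry. Its coefficients can be extracted once per iteration in $O(n^3)$ operations, after which each of the $O(n)$ candidate values of $j$ is tested by a single polynomial evaluation. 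This gives $O(n^4)$ total operations across the loop, on integers of length $O(n^2\log mn)$.

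The main obstacle is making this fast test for $j$ precise: one must verify that the coefficients of the polynomial in $p^j$ remain integers of controlled size, and that the auxiliary pair $(g_1,g_2)$ produced in iteration $i-1$ can be propagated into iteration $i$ without length blow-up. Once these bookkeeping estimates are in place, taking the maximum over all three blocks of both the operation count and the integer length yields the claimed $O(n^4\log mn)$ in each.
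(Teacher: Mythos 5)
Your high-level decomposition---Hadamard for integer lengths, Rosser--Schoenfeld for $p$, and the \textbf{for} loop as the only block where one must avoid recomputing determinants for every candidate $j$---matches the paper, and the integer-length bounds you derive (in particular $\log a\leq n^3\log m$ feeding into $\log|c^j|=O(n^3\log mn)$ and then $O(n^4\log mn)$ after a final Hadamard bound) are the ones the paper uses. The gap is in how the \textbf{for} loop's determinant test is actually made fast.

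You cite the transformation matrix $\begin{bmatrix}f_2 & -f_1\\ g_1 & g_2\end{bmatrix}$ from the proof of Lemma \ref{lem:nonzero} and assert that it makes $\det C(h_1,h_2)$ a polynomial of degree at most $2i-3$ in $p^j$ whose coefficients can be extracted in $O(n^3)$ operations. Two problems. First, the degree bound of $2i-3$ is not a bound on the resultant $\det C(h_1,h_2)$ itself; $p^j$ occurs in every coefficient of $h_1$ and $h_2$, so that determinant can have degree as large as $2(i-1)$ in $p^j$. The quantity with degree at most $2i-3$ in the Lemma \ref{lem:nonzero} proof is the top transformed polynomial $f_2(\adj\,\mM_i)_{i,1}-f_1(\adj\,\mM_i)_{i,2}$, and its degree bounds the \emph{number of bad values of} $j$, which is a different statement. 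Second, and more substantively, the transformation you cite isolates $p^j$ only in the \emph{constant} coefficient of the bottom polynomial; the top one has no $p^j$ at all. In the resulting Sylvester-style matrix $C(\tilde h_1,\tilde h_2)$, $p^j$ therefore appears on only the lower half of the main diagonal, so you are looking at $\det(A+p^jB)$ with $B$ of deficient rank. That is not a characteristic polynomial, and the $O(n^3)$ cost you claim for extracting the coefficients does not follow without further work. The paper's proof avoids this by using the modified identity in (\ref{eq:11}), whose left factor $\begin{bmatrix}f_2+p^jg_1x^{2i-3} & -f_1+p^jg_2x^{2i-3}\\ g_1 & g_2\end{bmatrix}$ still has constant determinant $\det C(f_1,f_2)$ (so the common-root criterion is preserved and Lemma \ref{lem:nonzero}'s bound $j\leq 2i-2$ still applies) but places $p^j$ in the \emph{leading} coefficient of $h'_1$ and the \emph{constant} coefficient of $h'_2$ simultaneously. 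That puts $p^j$ on the entire main diagonal of $C(h'_1,h'_2)$ with a fixed nonzero scalar coefficient, so $\det C(h'_1,h'_2)$ becomes, after scaling, a characteristic polynomial in $p^j$ and can be found by Danilevskii's method in $O(n^3)$ operations.

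You also skip a point the paper raises: computing $(\adj\,\mM_i)_{i,2}$ directly via a fraction-free method is obstructed because its constant-term matrix is singular (visible in (\ref{eq:4})), and possibly the linear-term matrix as well. The paper circumvents this by computing $h_2$ for two test values of $j$ that produce an invertible constant-term matrix and solving for $(\adj\,\mM_i)_{i,2}$. A complete operation count should include this step.
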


\begin{proof}We will use that finding determinants, adjugates, inverses, or characteristic polynomials of $n\times n$ matrices with entry magnitudes bounded by $m$ requires $O(n^3)$ operations on integers of length $O(n\log mn)$. For example, see Danilevskii's method for the characteristic polynomial \cite{danilevskii} and the Bareiss algorithm for the others \cite{bareiss}. Note that we may then compute determinants of matrices with linear polynomial entries in $O(n^3)$ operations provided the matrix of linear terms or the matrix of constant terms is invertible.

In the proof of Lemma \ref{lem:operator} we showed that the prime $p$ from line 1 is less than $(\log M_{n,1}^{3n}n^n)/0.462$. So finding it does not contribute to asymptotic complexity.

Now consider the \textbf{for} loop, where we must avoid recomputing the determinant in line 5 for each value of $j$ in order to meet the prescribed bound on operations.

Let $i\geq 3$ and fix some notation: $M$ is its value after line 4, $f_1\!=\!(\adj\,\mM_{i\hspace{-0.02cm}-\hspace{-0.02cm}1})_{i\hspace{-0.02cm}-\hspace{-0.02cm}1,1}$ and $f_2\!=\!(\adj\,\mM_{i\hspace{-0.02cm}-\hspace{-0.02cm}1})_{i\hspace{-0.02cm}-\hspace{-0.02cm}1,2}$, $g_1 $ and $g_2$ have degree at most $i\hspace{-0.05cm}-\hspace{-0.05cm}3$ and $f_1g_1\!+\!f_2g_2=$ $\det C(f_1,f_2)\!\neq\! 0$, and for some $j$, $h_1\!=\!(\adj\,\mM_{i})_{i,1}\! -\! p^jf_1$ and $h_2\!=\!(\adj\,\mM_{i})_{i,2}\! -\! p^jf_2$. Note for computing $(\adj\,\mM_{i})_{i,2}$ that the constant term matrix is not invertible (see (\ref{eq:4})), which may also be true of the linear term matrix. Because this complicates combining the Bareiss and Danivelskii algorithms, we could find $(\adj\,\mM_{i})_{i,2}$ indirectly by computing $h_2$ for two values of $j$ that produce an invertible constant term matrix (recall from (\ref{eq:4}) that $f_2$ has nonzero constant term), then solve for it.

Call the polynomials in the resulting column vector below $h'_1$ and $h'_2$: \begin{equation}\label{eq:11}\begin{bmatrix}f_2+p^jg_1x^{2i-3} & -f_1+p^jg_2x^{2i-3}\\ g_1 & g_2\end{bmatrix}\begin{bmatrix}h_1\\ h_2\end{bmatrix}\end{equation}\vspace{-0.1cm} $$=\begin{bmatrix}f_2(\adj\,\mM_{i})_{i,1}-f_1(\adj\,\mM_{i})_{i,2} - p^{j}\det C(f_1,f_2)x^{2i-3}\\g_1(\adj\,\mM_{i})_{i,1} + g_2(\adj\,\mM_{i})_{i,2} - p^{j}\det C(f_1,f_2)\end{bmatrix}.$$\vspace{-0.1cm}

\noindent Remark that if $j$ makes $h'_1$ and $h'_2$ avoid a common root, it does so for $h_1$ and $h_2$.

{View $C(h'_1,h'_2)$ as a matrix with linear polynomial entries where $p^j$ is the variable. This variable only appears in the leading term of $h'_1$ and the constant term of $h'_2$. So $p^j$ only occurs on the main diagonal of $C(h'_1,h'_2)$, where its coefficient is nonzero. In particular, the polynomial $\det C(h'_1,h'_2)$ can be found in $O(n^3)$ operations. Substituting different values of $p^j$ into this polynomial until one is nonzero avoids repeatedly finding determinants. And note that we still need only test up to $j=2i-2$ as stated in Lemma \ref{lem:nonzero} because the determinant of the matrix in (\ref{eq:11}) is a constant (a unit in $\bQ(x)$). Thus each \textbf{for} loop iteration requires $O(n^3)$ operations.\unskip\parfillskip 0pt \par}

The integers composing the linear polynomial matrix entries that begin each \textbf{for} loop iteration are small powers of $p=O(n\log M_{n,1}n)$ and entries in the adjugate of the input matrix, $M$. By Hadamard's bound they are thus $O(n\log M_{n,1}n)$ in length. Hadamard's bound also applies to the coefficients of $(\adj\,\mM_{i})_{i,1}$ and $(\adj\,\mM_{i})_{i,2}$, making their lengths $O(n^2\log M_{n,1}n)$. And it applies again to make $\det C((\adj\,\mM_{i})_{i,1}, (\adj\,\mM_{i})_{i,2}))$ have length $O(n^3\log M_{n,1}n)$. This is our bound on the length of $c$ in line 6 and hence the length of $c$ in line 7. The length of $c^j$ in line 8 is then $O(\max(\log a^2(2M_{n,1}n)^{3n}, \log |c|))=O(n^3\log mn)$, with the maximum accommodating the ceiling function. Then a final application of Hadamard's bound for lines 9 and 10 makes integer lengths $O(n^4\log mn)$. This is therefore a bound on the number of operations required by the Euclidean algorithm in line 9.\end{proof}

{In \cite{dinur}, Dinur proves the NP-hardness of short vector problems under the $\ell_{\infty}$-norm when $\alpha=n^{c/\log \log n}$ for some $c>0$ by giving a direct reduction from the Boolean satisfiability problem (\textsc{sat}). As a consequence, Theorems \ref{thm:ag_works} and \ref{thm:ag_speed} prove the same for both good Diophantine approximation and simultaneous approximation problems. (There is no gap inflation for $\textsc{gda}$ in line 11 under the $\ell_{\infty}$-norm.)\unskip\parfillskip 0pt \par}

\begin{corollary}\label{cor:nphard}Good Diophantine approximation and simultaneous approximation problems are \emph{NP}-hard under the $\ell_{\infty}$-norm with $\alpha=n^{c/\log \log n}$ for some $c>0$.\qed\end{corollary}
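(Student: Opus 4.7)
The plan is to use Algorithm \ref{alg:agrawal} as a polynomial-time Karp reduction from the short vector problem to the two simultaneous approximation problems under the $\ell_\infty$-norm, and then invoke Dinur's theorem. First I would observe that when $p = \infty$, the factor $n^{1/p}$ in line 11 of Algorithm \ref{alg:agrawal} equals $1$, so the \textsc{gda} branch is called with parameter exactly $\alpha$, matching the \textsc{sap} branch. By Theorem \ref{thm:ag_works}, a valid oracle answer with gap $\alpha$ therefore produces a valid \textsc{svp} solution with the same gap $\alpha$ under $\ell_\infty$, and by Theorem \ref{thm:ag_speed} the reduction runs in polynomial time on integers of polynomial bit length.

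The second key point is that Algorithm \ref{alg:agrawal} is \emph{dimension-preserving}: an input $M \in \M_n(\bZ)$ produces an oracle input $\bx \in \bQ^n$ of the same dimension $n$. Hence if either \textsc{gda} or \textsc{sap} in dimension $n$ with gap parameter $n^{c/\log\log n}$ had a polynomial-time algorithm, composing it with Algorithm \ref{alg:agrawal} would solve the $n$-dimensional $\ell_\infty$ short vector problem with gap $n^{c/\log\log n}$ in polynomial time. Dinur's direct reduction from \textsc{sat} in \cite{dinur} rules this out for some constant $c>0$, giving the claimed NP-hardness.

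The only mild obstacle is satisfying the preconditions of Algorithm \ref{alg:agrawal}, namely $n \geq 2$ and $M_{n,1} = \max_{i,j}|M_{i,j}| \neq 0$. The dimension restriction is harmless since $n = 1$ \textsc{svp} instances are trivial. For the positional assumption on the maximum entry, a polynomial-time row-and-column permutation (together with a sign flip if necessary) of $M$ preprocesses any nonsingular input into the required form without changing the lattice or the $\ell_\infty$ lengths of its vectors, so the gap parameter and the hardness transfer verbatim.
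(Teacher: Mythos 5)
Your proposal matches the paper's argument: the corollary follows by combining Dinur's NP-hardness of $\ell_\infty$-\textsc{svp} with gap $n^{c/\log\log n}$ with the gap-preserving, polynomial-time, dimension-preserving reduction of Algorithm~\ref{alg:agrawal} established in Theorems~\ref{thm:ag_works} and~\ref{thm:ag_speed}, together with the observation that $n^{1/p}=1$ when $p=\infty$ so the \textsc{gda} call in line~11 suffers no gap inflation. Your additional care with the preconditions ($n\geq 2$ and the placement of the maximal-magnitude entry in position $(n,1)$ via row/column permutations and a sign flip, all of which preserve $\ell_\infty$ lattice norms) is correct and corresponds to the paper's remark before Algorithm~\ref{alg:agrawal} that this positioning is assumed for notational convenience.
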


This result is known for good Diophantine approximation \cite{chen}, though the reduction $\textsc{sat}\to\textsc{svp}\to\textsc{gda}$ completed here is simpler. Chen and Meng adapt the work of Dinur as well as R\"{o}ssner and Seifert \cite{rossner2} to reduce \textsc{sat} to finding short integer vectors that solve a homogeneous system of linear equations (\textsc{hls}) via an algorithm from \cite{arora}, which changes the problem to finding pseudo-labels for a regular bipartite graph (\textsc{psl}). The number of equations in the \textsc{hls} system is then decreased to one (now called \textsc{sir}), wherefrom a reduction to \textsc{gda} has been known \cite{rossner}. Each link, $\textsc{sat}\to\textsc{psl}\to\textsc{hls}\to\textsc{sir}\to\textsc{gda}$, is gap-preserving under the $\ell_{\infty}$-norm.

Short vector problems are only known to be NP-hard under the $\ell_{\infty}$-norm. But there are other hardness results under a general $\ell_p$-norm for which Theorems \ref{thm:ag_works} and \ref{thm:ag_speed} can be considered complementary. See \cite{kumar} for an exposition.

Another corollary is the reduction from a simultaneous approximation problem to \textsc{gda}, giving the final row of Table \ref{tab:results}. By Proposition \ref{prop:lag_speed}, Algorithm \ref{alg:lagarias} results in one call to \textsc{svp} with integers of length $O(n\log m)$, where we can take $m$ to be the maximum magnitude among $a^{1/n^4}$ (still $\alpha=a/b$) and the integers defining $\bx$. Then Theorem \ref{thm:ag_speed} implies the reduction to \textsc{gda} requires $O(n^4\log m^nn)=O(n^5\log m)$ (absorbing the operations required by Algorithm \ref{alg:lagarias}) on integers of length $O(n^5\log m)$.

\subsection{Further discussion}\label{ss:discuss} The last algorithm was restricted to an $\ell_p$-norm for $p\in\{1,2,\infty\}$. So we will discuss what happens with a more general approach.

Multiplication by $MM'$, shown in (\ref{eq:9}), may change the gap between the length of the shortest vector in the simultaneous approximation lattice and that of the vector output by $\textsc{gda}$ or $\textsc{sap}$. That this potential inflation does not invalidate our output relies on the set of vector norms being discrete and $\alpha$ being rational---facts that were exploited to produce the expression in (\ref{eq:8}). The idea behind that paragraph is to find a nonempty interval $(\alpha\lambda, \alpha'\lambda)$, where $\lambda=\min^{\times}_{\bq\in\bZ^n}\!\|M\bq\|$, that contains no norms from the lattice defined by $M$ (or even $\bZ^n$ for the interval tacitly given in the proof). This creates admissible inflation, $\alpha'/\alpha$, which is (\ref{eq:8}).

The purpose of restricting to $\ell_1$, $\ell_2$, or $\ell_{\infty}$ is to facilitate finding this interval. Knowing that $(b\alpha\lambda)^2\in\bZ$ for some $b\in\bZ$ simplifies the search for $\alpha'$. The same is true for any $\ell_p$-norm with $p\in\bN$. But the immediate analogs of (\ref{eq:8}), (\ref{eq:9}), and (\ref{eq:10}) lead to a replacement for the very last bound used in the proof of the form $$\frac{(5pa^pn|\det M|^{p/n})\|MA\|_{\text{op}}}{2|\det M|}.$$ This makes the number of operations needed to execute line 9 depend exponentially on the input length $\log p$ (though it is still polynomial for any \textit{fixed} $p$). We have not taken into account, however, the possibility of a nontrivial lower bound for the difference between large consecutive integers which are sums of $n$ perfect $p^\text{th}$ powers. Such a bound would allow for a longer interval, $(\alpha\lambda, \alpha'\lambda)$, that provably contains no lattice norms.

These arguments are all in effort to perfectly preserve the gap when reducing to \textsc{sap} or, when $p=\infty$, \textsc{gda}. The situation clarifies if a small amount of inflation is allowed. To solve a short vector problem with gap $\alpha$ using $\textsc{sap}$ with gap $\alpha' < \alpha$, inequality (\ref{eq:10}) becomes $$\frac{1+\|MA\|_{\text{op}}/|c^j\det M|}{1 - \|MA\|_{\text{op}}/|c^j\det M|}\leq\frac{\alpha}{\alpha'}.$$ We still need to substitute a power of $c$ for $x$ in line 8 for the purpose of Lemma \ref{lem:coprime}. Given these two constraints, it is sufficient to take $M\leftarrow M(c^j)$ for $$j=\left\lceil\log_{|c|}\frac{(\alpha + \alpha')\|MA\|_{\text{op}}}{(\alpha - \alpha')|\!\det M|}\right\rceil,$$ which can be made more explicit with Lemma \ref{lem:operator}. There is no need to insist that $\alpha$ is rational or impose a restriction on $p\in[1,\infty]$ defining the norm.

As a final note, the reduction to \textsc{sap} again adapts to inhomogeneous forms of these problems while the reduction to \textsc{gda} does not. If $\by\in\bQ^n$, then the algorithm (which now reduces the \textit{closest} vector problem) can end by solving the simultaneous approximation problem of finding $q_0\in\bZ$ with $\|\{q_0\bx-(MM')^{-1}\by\}\|\leq\alpha\min_{q\in\bZ}^{\times}\!\|\{q\bx-(MM')^{-1}\by\}\|$, using the matrix from (\ref{eq:9}). But unless we know that the last coordinate (where the $1$ is located in (\ref{eq:12})) of $(MM')^{-1}\by$ is an integer, there is no clear modification to Proposition \ref{prop:equivalent} that permits the use of \textsc{gda}.

\bibliographystyle{plain}
\bibliography{refs}

\end{document}